\renewcommand{\qedsymbol}{$\blacksquare$}
\renewenvironment{proof}[1][\proofname]{\noindent{\bfseries #1.} }{\hfill \qedsymbol \medskip}
\let\oldtitle\title
\renewcommand{\title}[1]{\oldtitle{#1}\newcommand{\theshorttitle}{#1}}
\newcommand{\shorttitle}[1]{\renewcommand{\theshorttitle}{#1}}
\let\oldauthor\author
\renewcommand{\author}[1]{\oldauthor{#1}\newcommand{\theshortauthor}{#1}}
\newcommand{\shortauthor}[1]{\renewcommand{\theshortauthor}{#1}}
\newcommand*{\affilref}[1]{\ref{affiliation#1}}
\newcommand*{\affiliation}[3]{
	\footnotetext[#1]{\label{affiliation#2} #3}
}
\crefname{Assumption}{Assumption}{Assumptions}
\crefname{Algorithm}{Algorithm}{Algorithms}
\crefname{Proposition}{Proposition}{Propositions}
\newtheorem{Theorem}{Theorem}[section]
\newtheorem{Proposition}[Theorem]{Proposition}
\theoremstyle{definition}
\newtheorem{Definition}[Theorem]{Definition}
\newtheorem{Assumption}[Theorem]{Assumption}
\newtheorem{Remark}[Theorem]{Remark}
\newcommand*{\arXiv}[1]{\bgroup\color{blue}\href{https://arxiv.org/abs/#1}{arXiv:#1}\egroup}
\newcommand*{\doi}[1]{\href{https://doi.org/#1}{\nolinkurl{doi:#1}}}
\newcommand*{\email}[1]{\bgroup\color{blue}\href{mailto:#1}{#1}\egroup}
\renewcommand*{\url}[1]{\bgroup\color{blue}\href{#1}{#1}\egroup}
\newcommand{\todo}[1]{\bgroup\color{red}\bfseries#1\egroup}
\renewcommand*{\paragraph}[1]{\smallskip\noindent\textbf{\textsf{#1}}\,\,}
\newcommand*{\argmax}{\mathop{\textup{arg\,max}}}
\newcommand*{\argmin}{\mathop{\textup{arg\,min}}}
\newcommand*{\defeq}{\coloneqq}
\newcommand*{\Naturals}{\mathbb{N}}
\renewcommand*{\P}{\mathbb{P}}
\newcommand*{\quark}{\setbox0\hbox{$x$}\hbox to\wd0{\hss$\cdot$\hss}}
\newcommand*{\Reals}{\mathbb{R}}
\newcommand*{\Absval}[1]{\left\vert #1 \right\vert}
\newcommand*{\step}[2]{#1^{(#2)}}
\newcommand*{\Fdag}{F^{\dagger}}
\newcommand*{\Fnum}{G}
\newcommand*{\Fapp}{F}
\newcommand*{\yobs}{y}
\newcommand*{\quality}{q}
\newcommand*{\area}{a}
\newcommand*{\varea}{{\boldsymbol\area}}
\newcommand*{\Nobs}{I}
\newcommand*{\Nstep}{N}
\newcommand*{\ER}{\mathcal{E}}
\newcommand*{\Area}{\mathrm{A}}
\newcommand*{\WA}{\mathrm{WA}}
\newcommand*{\indic}{\mathbbm{1}}
\newcommand*{\demi}{\frac{1}{2}}
\numberwithin{equation}{section}
\numberwithin{figure}{section}
\numberwithin{table}{section}
\newcommand{\revised}[1]{\bgroup\color{blue}#1\egroup}
\newcommand{\bbracket}[1]{\llbracket#1\rrbracket}
\newcommand{\subseq}{\varphi}
\renewcommand{\revised}[1]{#1}
\setlist[itemize]{nosep}
\setlist[enumerate]{nosep}
\begin{document}

\title{Adaptive reconstruction of imperfectly-observed monotone functions, with applications to uncertainty quantification}
\shorttitle{Adaptive reconstruction of imperfectly-observed monotone functions}

\author{%
	L.\ Bonnet\textsuperscript{\affilref{ONERA},\affilref{MSSMat}}
	\and
	J.-L.\ Akian\textsuperscript{\affilref{ONERA}}
	\and
	{\'E}.\ Savin\textsuperscript{\affilref{ONERA}}
	\and
	T.~J.\ Sullivan\textsuperscript{\affilref{Warwick},\affilref{ZIB}}%
}
\shortauthor{L.~Bonnet, J.-L.~Akian, \'E.~Savin, and T.~J.~Sullivan}

\newcommand{\LBsays}[1]{\bgroup\color{orange}{\text{LB says: }#1}\egroup}
\newcommand{\ESsays}[1]{\bgroup\color{green}{\text{\'ES says: }#1}\egroup}
\newcommand{\TJSsays}[1]{\bgroup\color{purple}{\text{TJS says: }#1}\egroup}
\newcommand{\citationneeded}{\textsuperscript{[\bgroup\color{blue}citation needed\egroup]}}

\date{\today}

\maketitle

\affiliation{1}{ONERA}{ONERA, 29 Avenue de la Division Leclerc, 92320 Ch{\^a}tillon, France (\email{luc.bonnet@onera.fr}, \email{jean-luc.akian@onera.fr}, \email{eric.savin@onera.fr})}
\affiliation{2}{MSSMat}{Laboratoire MSSMat - UMR CNRS 8579, CentraleSup{\'e}lec, 8--10 rue Joliot Curie, 91190 Gif sur Yvette, France (\email{luc.bonnet@ens-paris-saclay.fr})}
\affiliation{3}{Warwick}{Mathematics Institute and School of Engineering, University of Warwick, Coventry CV4 7AL, United Kingdom (\email{t.j.sullivan@warwick.ac.uk})}
\affiliation{4}{ZIB}{Zuse Institute Berlin, Takustra{\ss}e 7, 14195 Berlin, Germany (\email{sullivan@zib.de})}

\begin{abstract}
	\paragraph{Abstract:}
	Motivated by the desire to numerically calculate rigorous upper and lower bounds on deviation probabilities over large classes of probability distributions, we present an \revised{adaptive} algorithm for the reconstruction of increasing real-valued functions. While this problem is similar to the classical statistical problem of isotonic regression, the optimisation setting alters several characteristics of the problem and opens natural algorithmic possibilities. We present our algorithm, establish sufficient conditions for convergence of the reconstruction to the ground truth, and apply the method to synthetic test cases and a real-world example of uncertainty quantification for aerodynamic design.

	\paragraph{Keywords:}
	adaptive approximation $\bullet$
	isotonic regression $\bullet$
	optimisation under uncertainty $\bullet$
	uncertainty quantification $\bullet$
	aerodynamic design

	\paragraph{2010 Mathematics Subject Classification:}
	49M30 
	$\bullet$
	62G08 
	$\bullet$
	68T37 
\end{abstract}

\section{Introduction}
\label{sec:introduction}%

This paper considers the problem of adaptively reconstructing a monotonically increasing function $\Fdag$ from imperfect pointwise observations of this function.
In the statistical literature, the problem of estimating a monotone function is commonly known as \emph{isotonic regression}, and it assumed that the observed data consist of noisy pointwise evaluations of $\Fdag$.
However, we consider this problem under assumptions that differ from the standard formulation, and these differences motivate our algorithmic approach to the problem.
To be concrete, our two motivating examples are that
\begin{equation}
	\label{eq:F_dagger_intro_1}
	\Fdag (x) \defeq \P_{\Xi \sim \mu} [ g(\Xi) \leq x ]
\end{equation}
is the cumulative distribution function (CDF) of a known real-valued function $g$ of a random variable $\Xi$ with known distribution $\mu$, or that
\begin{equation}
	\label{eq:F_dagger_intro_2}
	\Fdag (x) \defeq \sup_{(g, \mu) \in \mathcal{A}} \P_{\Xi \sim \mu} [ g(\Xi) \leq x ]
\end{equation}
is the supremum of a family of such CDFs over some class $\mathcal{A}$.
We assume that we have access to a numerical optimisation routine that can, for each $x$ and some given numerical parameters $\quality$ (e.g.\ the number of iterations or other convergence tolerance parameters), produce a \emph{numerical estimate} or \emph{observation} $\Fnum(x,\quality)$ of $\Fdag(x)$; 
furthermore, we assume that $\Fnum(x,\quality) \leq\Fdag(x)$ is always true, i.e.\ the numerical \revised{optimisation} routine always under-estimates the true optimum value, and that the positive error $\Fdag(x) - \Fnum(x,\quality)$ can be controlled to some extent through the choice of the optimisation parameters $\quality$, but remains essentially influenced by randomness in the optimisation algorithm for each $x$.
\revised{The assumption $\Fnum(x,\quality) \leq\Fdag(x)$ is for example coherent with either \Cref{eq:F_dagger_intro_1}, which may be approached by increasing the number of samples (say $\quality$) in a Monte Carlo simulation, or \Cref{eq:F_dagger_intro_2}, which is a supremum over a set that may be explored only partially by the algorithm.}

A single observation $\Fnum(x, q)$ yields some limited information about $\Fdag(x)$;
a key limitation is that one may not even know a priori how accurate $\Fnum(x, q)$ is.
Naturally, one may repeatedly evaluate $\Fnum$ at $x$, perhaps with different values of the optimisation parameters $\quality$, in order to more accurately estimate $\Fdag(x)$.
\revised{However, a key observation is that a \emph{suite} of observations $\Fnum(x_i, \quality_{i})$, $i = 1, \dots, I$, contains much more information than simply estimates of $\Fdag(x_{i})$, $i = 1, \dots, I$, and this information can and must be used.
For example, suppose that the values $(\Fnum(x_i, \quality_{i}))_{i = 1}^{I}$ are not increasing, e.g.\ because}
\[
	\Fnum(x_i, \quality_{i}) > \Fnum(x_{i'}, \quality_{i'})
	\quad
	\text{and}
	\quad
	x_{i} < x_{i'} .
\]
\revised{Such a suite of observations would be inconsistent with the axiomatic requirement that $F^{\dagger}$ is an increasing function.
In particular, while the observation at $x_{i}$ may be relatively good or bad on its own merits, the observation $\Fnum(x_{i'}, \quality_{i'})$ at $x_{i'}$, which violates monotonicity, is in some sense ``useless'' as it gives no better lower bound on $\Fdag(x_{i'})$ than the observation at $x_{i}$ does.
The observation at $x_{i'}$ is thus a good candidate for repetition with more stringent optimisation parameters $\quality$ --- and this is not something that could have been known without comparing it to the rest of the data set.}

The purpose of this article is to leverage this and similar observations to define an algorithm for the reconstruction of the function $\Fdag$, repeating old observations of insufficient quality and introducing new ones as necessary.
The principal parameter in the algorithm is an ``exchange rate'' $\ER$ that quantifies the degree to which the algorithm prefers to have a few high-quality evaluations versus many poor-quality evaluations.
Our approach is slightly different from classical isotonic (or monotonic) regression, which is understood as the least-squares fitting of an increasing function to a set of points in the plane.
The latter problem is uniquely solvable and its solution can be constructed by the pool adjacent violators algorithm (PAVA) extensively studied in \citet{Barlow1972}.
This algorithm consists of exploring the data set from left to right until the monotonicity condition is violated, and replacing the corresponding observations by their average while back-averaging to the left if needed to maintain monotonicity.
Extensions to the PAVA have been developed by \citet{Jan2009} to consider non least-squares loss functions and repeated observations, by \citet{Tibshirani2011} to consider ``nearly-isotonic'' or ``nearly-convex'' fits, and by \citet{Jordan2019} to consider general loss functions and partially ordered data sets.
Useful references on isotonic regression also include \citet{Robertson1988} and \citet{Groeneboom2014}.

The remainder of this paper is structured as follows.
\Cref{sec:setup} presents the problem description and notation, after which the proposed adaptive algorithm for the reconstruction of $\Fdag$ is presented in \Cref{sec:algorithms}.
We demonstrate the convergence properties of the algorithm in \Cref{sec:proof} and study its performance on several analytically tractable test cases in \Cref{sec:tests}.
\Cref{sec:application_ouq} details the application of the algorithm to a challenging problem of the form \Cref{eq:F_dagger_intro_2} drawn from aerodynamic design.
Some closing remarks are given in \Cref{sec:conclusion}.

\section{Notation and problem description}%
\label{sec:setup}%

In the following, the ``ground truth'' response function that we wish to reconstruct is denoted $\Fdag \colon [a,b] \to \Reals$ and has inputs $x \in [a,b] \subset \Reals$.
It is assumed that $\Fdag$ is monotonically increasing \revised{and non-constant on $[a,b]$}.
In contrast, $\Fnum\colon [a,b]\times\Reals_{+} \to \Reals$ denotes the numerical process used to obtain an imperfect pointwise observation $\yobs$ of $\Fdag(x)$ at some point $x\in[a,b]$ for some numerical parameter $\quality\in\Reals_{+}$.
Here, on a heuristic level, $\quality > 0$ stands for the ``quality'' of the noisy evaluation $\Fnum (x, \quality)$.

The main aim of this paper is to show the effectiveness of the proposed algorithm for the adaptive reconstruction of $\Fdag$, which could be continuous or not, from imperfect pointwise observations $\Fnum(x_{i}, \quality_{i})$ of $\Fdag$, where we are free to choose $x_{i + 1}$ and $\quality_{i + 1}$ adaptively based upon $x_{j}$, $\quality_{j}$, and $\Fnum(x_{j}, \quality_{j})$ for $j \leq i$

First, we associate with $\Nobs$ imperfect pointwise observations \revised{$\smash{\{x_i, \yobs_i \defeq \Fnum(x_i,\quality_i)\}_{i=1}^{\Nobs}} \subset [a,b] \times \Reals$}, positive numbers $\{\quality_i\}_{i = 1}^{\Nobs} \subset \Reals_{+}$ which we will call \emph{qualities}.
The quality $\quality_i$ quantifies the confidence we have in the pointwise observation $\yobs_i$ of $\Fdag(x_i)$ using the numerical process $\Fnum(x_i,\quality_i)$.
The higher this value, the greater the confidence.
We divide this quality as the product of two different numbers $c_i$ and $r_i$, $\quality_i=c_i\times r_i$, with the following definitions:
\begin{itemize}
	\item \emph{Consistency} $c_i \in \{0,1\}$\string: This describes the fact that two successive points must be monotonically consistent with respect to each other.
	That is, when one takes two input values $x_2 > x_1$, one should have $\yobs_2 \geq \yobs_1$ as $\yobs$ must be monotonically increasing.
	There is no consistency associated with the very first data point as it does not have any predecessor.
	\item \emph{Reliability} $r_i \in \mathbb{R}_{+}$\string: This describes how confident we are about the numerical value.
	Typically, it will be related to some error estimator if one is available, or the choice of optimisation parameters.
	It is expected that the higher the reliability, the closer the pointwise observation is to the true value, in average.	
\end{itemize}
Typically, if the observation $\yobs_{i+1}=\Fnum(x_{i+1},\quality_{i+1})$ is consistent with regard to the observation $\yobs_i=\Fnum(x_i,\quality_i)$ where $x_{i+1} > x_i$, the quality $\quality_{i+1}$ associated with $\yobs_{i+1}$ will be equal to $\quality_{i+1} = r_{i+1}\in\Reals_{+}^{*}$ since $c_{i+1} = 1$ in this case.
If the value is not consistent, we have $\quality_{i+1} = r_{i+1}\times c_{i+1} = 0$.
Finally, if $x = a$ there is no notion of consistency as there is no point preceding it.
Thereby, the quality associated with this point is only equal to its reliability.

Moreover, we associate to these pointwise observations a notion of area, illustrated in \Cref{fig:possible_func_and_piecewise_constant} and defined as follows.
Consider two consecutive points $x_i$ and $x_{i+1}$ with their respective observations $\yobs_i$ and $\yobs_{i+1}$, the area $\area_i$ for these two points is 
\begin{equation}
	\label{eq:area}
	\area_i = (x_{i+1}-x_i)\times(\yobs_{i+1} - \yobs_i)\,.
\end{equation}
Thus, we can define a vector $\varea= \{\area_i\}_{i=1}^{\Nobs-1}$ which contains all the computed areas for the whole dataset.
In addition, we can assure that if we take two points $x_1$ and $x_2>x_1$ with $\yobs_1 = \Fdag(x_1)$ and $\yobs_2 = \Fdag(x_2)$---namely, the error at these point is equal to zero, the graph of ground truth function $\Fdag$ must lie in the rectangular area spanned by the two points $(x_{1}, \Fdag(x_{1}))$ and $(x_{2}, \Fdag(x_{2}))$.

\begin{figure}[t]%
	\centering%
	\subfigure[Possible ground truth functions between two consecutive points $x_1$ and $x_2$.
	The ground truth function must lie in the area formed by these two points.]{\includegraphics[width=0.45\linewidth]{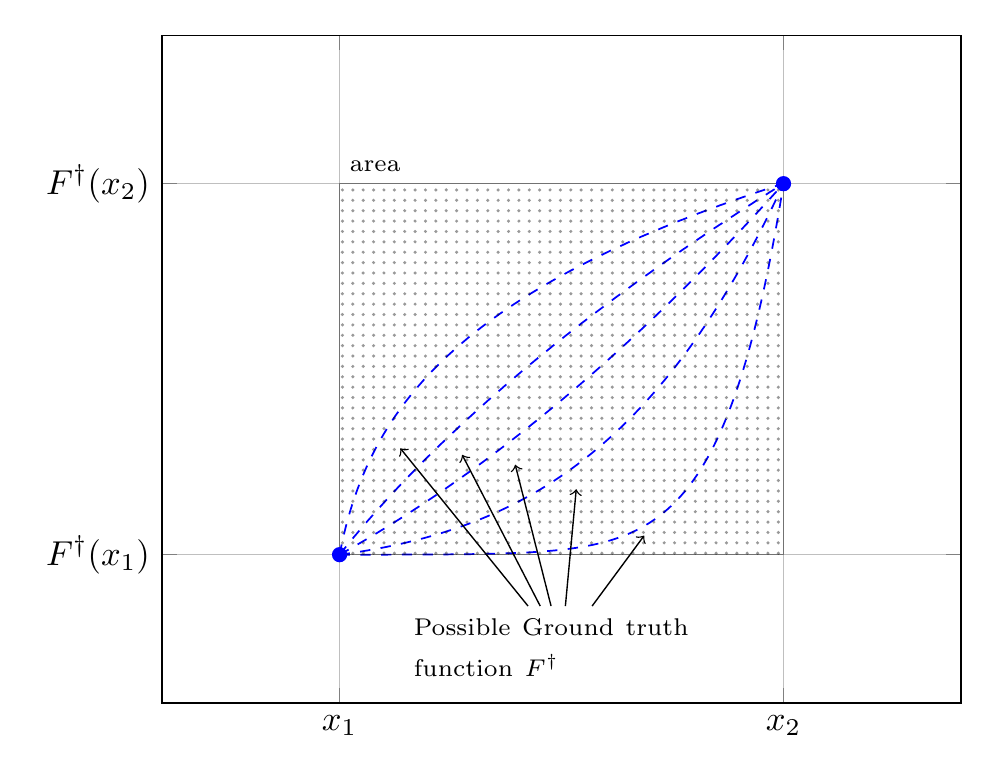}}%
	\subfigure[Right-continuous piecewise constant interpolation function.]{\includegraphics[width=0.45\linewidth]{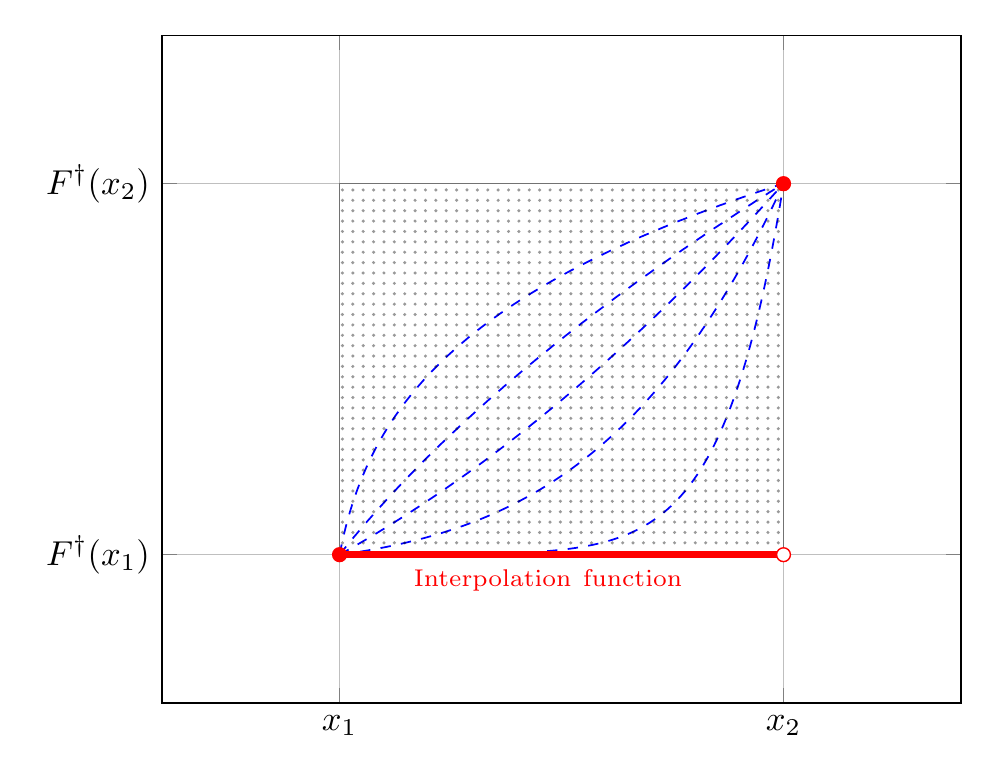}}%
	\caption{Possible ground truth functions between two consecutive points $x_1$ and $x_2$, and our choice of piecewise constant interpolant.}
	\label{fig:possible_func_and_piecewise_constant}
\end{figure}

To adopt a conservative point of view, we choose as the approximating function $\Fapp$ of $\Fdag$ a piecewise constant interpolation function, say:
\begin{equation}
	\Fapp(x) = \sum_{i=1}^{\Nobs-1} \yobs_i \indic_{[x_i, x_{i+1})}(x)\,,
\end{equation}
where $\indic_\mathcal{I}$ denotes the indicator function of the interval $\mathcal{I}$.
We do not want this interpolation function to overestimate the true function $\Fdag$ as one knows that the numerical estimate in our case always underestimates the ground truth function $\Fdag(x)$.
See \Cref{fig:possible_func_and_piecewise_constant} for an illustration of this choice, which can be viewed as a worst-case approach.
Indeed, this chosen interpolation function is the worst possible function underestimating $\Fdag$ given two points $x_1$ and $x_2$ and their respective observations $\yobs_1$ and $\yobs_2$.


\section{Reconstruction algorithms}%
\label{sec:algorithms}%

The reconstruction algorithm that we propose, \Cref{our_algorithm}, is driven to produce a sequences of reconstructions that converges to $\Fdag$ by following a principle of \emph{area minimisation}:
we associate to the discrete data set $\{x_i, \yobs_i\}_{i = 1}^{\Nobs} \subset [a, b] \times \Reals$ a natural notion of area \eqref{eq:area} as explained above, and seek to drive this area towards zero.
\revised{The motivation behind this objective is in \Cref{prop:sqrt_rectangle} which states that the area converges to $0$ as more points are added to the data set.}
\revised{However,} the objective of minimising the area is complicated by the fact that evaluations of $\Fdag$ are imperfect.
Therefore, a key user-defined parameter in the algorithm is $\ER \in (0, \infty)$, which can be thought of as an ``exchange rate'' that quantifies to what extent the algorithm prefers to redo poor-quality evaluations of the target function versus driving the area measure to zero.

\subsection{Algorithm}

The main algorithm is organized as follows, starting from $\step{\Nobs}{0}\geq 2$ points and a dataset that is assumed to be consistent at the initial step $n=0$.
It goes through $\Nstep$ iterations, where $\Nstep$ is either fixed a priori, or obtained a posteriori once a stopping criterion is met.
Note that $\smash{\quality_\text{new}}$ stands for the quality of a newly generated observation $\smash{\yobs_\text{new}}$ for any new point $x_{\text{new}}$ introduced by the algorithm.
The latter is driven by the user-defined ``exchange rate'' $\ER$ as explained just above.
At each step $n$, the algorithm computes the weighted area $\step{\WA}{n}$ as the minimum of the quality times the sum of the areas of the data points:
\begin{equation}
		\step{\WA}{n} =\step{\quality_-}{n}\times\step{\Area}{n}\,,
\end{equation}
where
\begin{equation}\label{eq:qm-area}
		\step{\quality_-}{n} = \min_{1\leq i\leq\step{\Nobs}{n}}\{\step{\quality_i}{n}\}\,,\quad\step{\Area}{n}=\sum_{i=1}^{\step{\Nobs}{n}-1}\step{\area_i}{n}\,,
\end{equation}
$\step{\area_i}{n}$ is the area computed by \Cref{eq:area} at step $n$ (see also \Cref{eq:arean}), and $\smash{\step{\Nobs}{n}}$ is the number of data points.
Then it is divided into two parts according to the value of $\smash{\step{\WA}{n}}$ compared to $\ER$.
\begin{itemize}
	\item If $\smash{\step{\WA}{n} < \ER}$, then the algorithm aims at increasing the quality $\smash{\step{\quality_-}{n}}$ of the worst data point (the one with the lowest quality) with index $\smash{\step{i_-}{n}=\argmin_{1\leq i\leq \step{\Nobs}{n}}\{\step{\quality_i}{n}\}}$ at step $n$.
	It stores the corresponding old value $\smash{\yobs_\text{old}}$, searches for a new value $\smash{\yobs_\text{new}}$ by improving successively the quality of this very point, and stops when $\smash{\yobs_\text{new}} > \smash{\yobs_\text{old}}$.
	\item If $\smash{\step{\WA}{n} \geq \ER}$, then the algorithm aims at driving the total area $\step{\Area}{n}$ to zero.
	In that respect, it identifies the biggest rectangle
	\revised{
	\begin{equation}\label{eq:ap}
	\step{\area_+}{n}= \max_{1\leq i\leq \step{\Nobs}{n}-1}\{\step{\area_i}{n}\}
	\end{equation}
	}
	and its index
	\revised{
	\begin{equation}\label{eq:ip}
	\step{i_+}{n}=\argmax_{1\leq i\leq \step{\Nobs}{n}-1}\{\step{\area_i}{n}\}
	\end{equation}
	}
	and adds a new point $x_{\text{new}}$ at the middle of this biggest rectangle.
	Then, it computes a new data value $\smash{\yobs_\text{new}} = \Fnum(x_{\text{new}},\quality_\text{new})$ with a new quality $\smash{\quality_\text{new}}$.
\end{itemize}
In both cases, \revised{the numerical parameters $\smash{\quality_\text{new}}$ (for example a number of iterations, or the size of a sampling set or a population) are arbitrary and any value can be chosen in practice each time a new point $x_{\text{new}}$ is added to the dataset. They can be increased arbitrarily as well each time such a new point has to be improved}.
\revised{Indeed, the numerical parameters $\quality$ of the optimisation routine we have access to can be increased as much as desired, and increasing them will improve the estimates $\Fnum(x,\quality)$ of the true function $\Fdag(x)$ uniformly in $x$; see \Cref{ass:Fnum2Fdag}}.
The algorithm then verifies the consistency of the dataset by checking the quality of each point.
If there is any inconsistent point, the algorithm computes a new value until obtaining consistency by improving successively the corresponding reliability.
\revised{This is achieved in a finite number of steps starting from an inconsistent point and exploring the dataset from the left to the right.}

Finally, the algorithm updates the quality vector $\smash{\{\step{\quality_i}{n+1}\}_{i=1}^{\step{\Nobs}{n+1}}}$, the area vector $\smash{\{\step{\area_i}{n+1}\}_{i=1}^{\step{\Nobs}{n+1}}}$, the worst quality $\smash{\step{\quality_-}{n+1}}$ and the index $\smash{\step{i_-}{n+1}}$ of the corresponding point, the biggest rectangle $\smash{\step{\area_+}{n+1}}$ and its index $\smash{\step{i_+}{n+1}}$, and then the new weighted area $\smash{\step{\WA}{n+1}}$.

\begin{algorithm}
	\small
	\SetAlgoLined
	\caption{Adaptive algorithm to reconstruct a monotonically increasing function $\Fdag$}\label[algorithm]{our_algorithm}
	\BlankLine
	 \KwIn{$\step{\Nobs}{0} \geq 2$, $\{\step{x_i}{0}, \step{\yobs_i}{0},\step{\quality_i}{0}\}_{i=1}^{\step{\Nobs}{0}}$ and $\ER$.}
	 \KwOut{$\{\step{x_i}{\Nstep}, \step{\yobs_i}{\Nstep},\step{\quality_i}{\Nstep}\}_{i=1}^{\step{\Nobs}{\Nstep}}$ with $\step{\Nobs}{\Nstep} \geq \step{\Nobs}{0}$.}
	  \BlankLine
	\textbf{Initialization}:\\
	 Get the worst quality point and its index\string:
	 \begin{itemize}[leftmargin=*,labelsep=5.8mm]
		\item $\quality_{-}^{(0)} = \underset{1\leq i\leq \step{\Nobs}{0}}{\min}\{\step{\quality_i}{0}\}$;
		\item $i_{-}^{(0)} = \underset{1\leq i\leq \step{\Nobs}{0}}{\argmin}\{\step{\quality_i}{0}\}$.
	 \end{itemize}
	Compute the area of each pair of data points\string:  $\step{\area_i}{0} = (\step{x_{i+1}}{0}- \step{x_i}{0})\times(\step{\yobs_{i+1}}{0} -\step{\yobs_i}{0})$.\\ 
	Get the biggest rectangle and its index\string:  
	 \begin{itemize}[leftmargin=*,labelsep=5.8mm]
		 \item $\area_{+}^{(0)} = \underset{1\leq i\leq \step{\Nobs}{0}-1}{\max}\{\step{\area_i}{0}\}$;
		 \item $i_{+}^{(0)} = \underset{1\leq i\leq \step{\Nobs}{0}-1}{\argmax}\{\step{\area_i}{0}\}$.
	 \end{itemize}
	Define the weighted area at step $n = 0$ as $\step{\WA}{0} = \quality_{-}^{(0)}\times\sum\limits_{i=1}^{\step{\Nobs}{0}-1}\step{\area_i}{0}$.\\
	
	\While{$n\leq\Nstep$}{
	\uIf{$\step{\WA}{n} < \ER$}{
	Data points are unchanged\string: $\step{\Nobs}{n+1} = \step{\Nobs}{n}$ and $\{\step{x_i}{n+1}\}_{i=1}^{\step{\Nobs}{n+1}} = \{\step{x_i}{n}\}_{i=1}^{\step{\Nobs}{n}}$\;
	Store the old value $\yobs_\text{old} = \step{\yobs_{i_{-}^{(n)}}}{n}$\;
	\While{$\yobs_\textup{new} \leq \yobs_\textup{old}$}{Compute a new value $\yobs_\text{new} = \Fnum(\step{x_{i_{-}^{(n)}}}{n},\quality_\text{new})$\;}
	}
	\Else{
	Introduce a new point at the middle of the biggest rectangle\string: $\step{\Nobs}{n+1} = \step{\Nobs}{n} + 1$, $x_\text{new} = \demi(\step{x_{i_{+}^{(n)}}}{n} + \step{x_{i_{+}^{(n)}+1}}{n})$,
	and $(\step{x_1}{n+1},\dots,\step{x_{i_{+}^{(n)}}}{n+1}, \step{x_{i_{+}^{(n)}+1}}{n+1},\step{x_{i_{+}^{(n)}+2}}{n+1}, \dots, \step{x_{\step{\Nobs}{n+1}}}{n+1}) = (\step{x_1}{n},\dots,\step{x_{i_{+}^{(n)}}}{n}, x_\text{new}, \step{x_{i_{+}^{(n)}+1}}{n}, \dots, \step{x_{\step{\Nobs}{n}}}{n})$\;
	Compute the new value $\yobs_\text{new}=\Fnum(x_\text{new},\quality_\text{new})$\;
	}
	Verify consistency of the pointwise observations $\{\step{\yobs_{i}}{n+1})\}_{i=1}^{\step{I}{n+1}}$ by checking their quality. If there are not consistent, recompute them until obtaining consistency and then update the quality vector\;
	Compute the new quality vector $\{\step{\quality_i}{n+1}\}_{i=1}^{\step{\Nobs}{n+1}}$ and area vector $\{\step{\area_i}{n+1}\}_{i=1}^{\step{\Nobs}{n+1}}$\;
	Update $\step{\quality_{-}}{n+1}, \step{i_{-}}{n+1}, \step{\area_+}{n+1}$ and $\step{i_{+}}{n+1}$\;
	Compute $\step{\WA}{n+1} = \step{\quality_{-}}{n+1}\times\sum\limits_{i=1}^{\step{\Nobs}{n+1}-1}\step{\area_i}{n+1}$\;
	$n = n + 1$\;
	}
	\label{algo:b}
\end{algorithm}

\subsection{Proof of convergence}
\label{sec:proof}
We denote by $\step{\Nobs}{n}$ the number of data points, and $\smash{\{\step{x_i}{n},\step{\yobs_i}{n},\step{\quality_i}{n}\}_{i=1}^{\step{\Nobs}{n}}}$ the positions of the data points, the observations given by the optimization algorithm at these positions, and the qualities associated with the optimization algorithm at the step $n$ of \Cref{our_algorithm}.
For each $i =\smash{1,\dots,\step{\Nobs}{n}-1}$, we define $\smash{\step{s_i}{n}} = \smash{[\step{x_i}{n}, \step{x_{i+1}}{n}[} \subset [a,b]$ and the vector containing all rectangle areas $\smash{\{\step{\area_i}{n}\}_{i=1}^{\step{\Nobs}{n}-1}}$ by:
\begin{equation}\label{eq:arean}
\step{\area_i}{n} = (\step{x_{i+1}}{n} - \step{x_i}{n})\times(\step{\yobs_{i+1}}{n} - \step{\yobs_i}{n})\,.
\end{equation}

The pointwise observation $\step{\yobs_i}{n}=\Fnum(\step{x_{i}}{n},\step{\quality_{i}}{n})$ is thus associated to the quality $\step{\quality_i}{n}\in\Reals_+$, which quantifies the confidence we have in this observation as outlined in the problem description in \Cref{sec:setup}.
This number can represent the inverse error achieved by the optimization algorithm, for example, or the number of iterations, or the number of individuals in a population, or any other numerical parameter pertaining to this optimization process.
The higher it is, the closer the observation is to the true target value.
Therefore we consider the following assumption on the numerical process $\Fnum$.
\begin{Assumption}\label{ass:Fnum2Fdag}
	$\Fnum(x,\quality)$ converges to $\Fdag(x)$ as $\quality\to+\infty$ uniformly in $x\in[a,b]$, that is:
	\begin{equation*}\
		\forall\epsilon>0\,,\;\exists Q>0\;\text{such that}\;\forall\quality\geq Q\,,\;\forall x\in[a,b]\,,\;\Absval{\Fnum(x,\quality)-\Fdag(x)}\leq\epsilon\,.
	\end{equation*}
\end{Assumption}

Moreover, we can guarantee that:
\begin{equation}
	\forall x \in[a,b]\,,\quad\forall \quality \in\Reals_+\,,\quad\Fnum(x,\quality) \leq \Fdag(x).
\end{equation} 

That is, the optimisation algorithm will always underestimate the true value $\Fdag(x)$.
In this way, one can model the relationship between the numerical estimate $\Fnum$ and the true value $\Fdag$ as:
\begin{equation}
	\label{eq:underestimate}
	\forall x \in [a,b]\,, \quad\forall \quality \in\Reals_+\,,\quad \Fnum(x,\quality) = \Fdag(x) - \epsilon(x,\quality)\,,
\end{equation}
where $\epsilon$ is a positive random variable.
\revised{These assumptions imply some robustness and stability of the algorithm we use}.

In the following, we will assume that $\step{I}{0}\geq2$.
That is, we have at least two data points at the beginning of the reconstruction algorithm.
Also among these points, we have one point at $x=a$ and another one at $x=b$.
Moreover, we will assume that the initial dataset is consistent.
Since \Cref{our_algorithm} recomputes the inconsistent points at all steps, we can also consider in the following that any new numerical observation is actually consistent.
\revised{Also, we need to guarantee that the weighted area $\step{\WA}{n}$ will permanently oscillate about $\ER$ as the iteration step $n$ is increasing; this is the purpose of \Cref{ass:q_increasing} below as shown in the subsequent \Cref{prop:Ep}. From these properties it will then be shown that \Cref{our_algorithm} is convergent, as stated in \Cref{thm:pw_cvg}}.

\begin{Assumption}\label{assump:consistent}
	Any new numerical value obtained by \Cref{our_algorithm} is consistent.  
\end{Assumption}
\begin{Assumption}\label{ass:q_increasing}
\revised{$\step{\quality_-}{n}\to+\infty$ as $n \to \infty$}.
\end{Assumption}

Within \Cref{assump:consistent} all points have a consistency of $1$, and therefore $\quality=r>0$ the reliability.
Besides, one has $\smash{\Fnum(\step{x_i}{n},\step{\quality_i}{n})\leq\Fnum(\step{x_{i+1}}{n},\step{\quality_{i+1}}{n})}$, that is, $\smash{\step{\yobs_i}{n}\leq\step{\yobs_{i+1}}{n}}$ for all points $i$ and steps $n$.
We finally define the sequence of piecewise constant reconstruction functions $\step{\Fapp}{n}$ as follows.
\begin{Definition} \label{def:piewise_constant_funct}
	For each $x \in [a,b]$, we define the reconstructing function $\step{\Fapp}{n}$ at step $n$ as:
	\begin{equation*}
	\step{\Fapp}{n}(x) = \sum_{i=1}^{\step{\Nobs}{n}-1}\step{\yobs_i}{n}\indic_{\step{s_i}{n}}(x)\,, 
	\end{equation*}
	and $\smash{\step{\Fapp}{n}(\step{x_{\step{\Nobs}{n}}}{n})} = \step{\Fapp}{n}(b)=\step{\yobs_{\step{\Nobs}{n}}}{n}$.
\end{Definition}

\revised{Now let
\begin{align}
	E^+ & \defeq \{n\in\Naturals\,;\;\step{\WA}{n}\geq\ER\}\,, &
	E^- & \defeq \{n\in\Naturals\,;\;\step{\WA}{n}<\ER\}\,,
\end{align}
which are such that $E^+\cup E^-=\Naturals$ and $E^+\cap E^-=\varnothing$. In order to prove the convergence (in a sense to be given) of \Cref{our_algorithm}, we first need to establish the following intermediate results, \Cref{prop:Ep}, \Cref{prop:sqrt_rectangle}, and \Cref{prop:Em}. They clarify the behaviour of the sequence $\smash{\step{\WA}{n}}$ when points are added to the dataset and the largest area $\smash{\step{\area_+}{n}}$ is divided into four parts at each iteration step $n$; see \Cref{fig:area_proof}}.

\begin{Proposition}\label{prop:Ep}
\revised{$E^+$ is infinite.}
\end{Proposition}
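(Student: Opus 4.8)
The plan is to argue by contradiction: suppose that $E^+$ is finite. Then there is some $N_0\in\Naturals$ such that $n\in E^-$, i.e.\ $\step{\WA}{n}<\ER$, for every $n\geq N_0$. For such $n$ the algorithm only ever executes the first branch (improving the worst point), so it never introduces a new point; consequently both the number of data points and their abscissae are frozen for $n\geq N_0$: one has $\step{\Nobs}{n}=\step{\Nobs}{N_0}=:\Nobs$ and $\step{x_i}{n}=\step{x_i}{N_0}$ for all $i$ and all $n\geq N_0$, with in particular $\step{x_1}{n}=a$ and $\step{x_{\Nobs}}{n}=b$. The only bookkeeping to confirm here is that the consistency-restoration sub-loop of \Cref{our_algorithm} neither moves abscissae nor adds points, which is immediate from its structure.

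Next I would push the qualities to infinity and pass to the limit in the area. By \Cref{ass:q_increasing} we have $\step{\quality_-}{n}\to+\infty$, and since $\step{\quality_i}{n}\geq\step{\quality_-}{n}$ for every $i$, each individual quality tends to $+\infty$ as well. \Cref{ass:Fnum2Fdag} then gives $\step{\yobs_i}{n}=\Fnum(\step{x_i}{n},\step{\quality_i}{n})\to\Fdag(x_i)$ for each fixed $i$, the abscissae being frozen. Because the positions no longer move, the total area therefore converges:
\[
	\step{\Area}{n}=\sum_{i=1}^{\Nobs-1}(\step{x_{i+1}}{n}-\step{x_i}{n})(\step{\yobs_{i+1}}{n}-\step{\yobs_i}{n})\longrightarrow\Area^{\infty}\defeq\sum_{i=1}^{\Nobs-1}(x_{i+1}-x_i)(\Fdag(x_{i+1})-\Fdag(x_i)).
\]

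The crucial point—and the one I expect to be the main obstacle—is to show that this limiting area $\Area^{\infty}$ is strictly positive, so that it cannot be swallowed up and must instead be amplified by the divergence of $\step{\quality_-}{n}$. Here I would use that $\Fdag$ is increasing, so every summand is nonnegative, together with the telescoping identity $\sum_{i=1}^{\Nobs-1}(\Fdag(x_{i+1})-\Fdag(x_i))=\Fdag(b)-\Fdag(a)$, which is strictly positive because $\Fdag$ is non-constant on $[a,b]$. Hence at least one increment $\Fdag(x_{i+1})-\Fdag(x_i)$ is positive, its width $x_{i+1}-x_i$ is positive, and therefore $\Area^{\infty}>0$. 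Combining this with $\step{\quality_-}{n}\to+\infty$ yields $\step{\WA}{n}=\step{\quality_-}{n}\,\step{\Area}{n}\to+\infty$, so $\step{\WA}{n}\geq\ER$ for all large $n$, contradicting $n\in E^-$. This contradiction shows that $E^+$ must be infinite. The genuinely delicate step is the strict positivity of $\Area^{\infty}$, i.e.\ ruling out that the observations could conspire to drive the area to zero while the positions stay fixed—exactly the place where the non-constancy hypothesis on $\Fdag$ is indispensable.
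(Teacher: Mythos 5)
Your proposal is correct and follows essentially the same route as the paper's own proof: assume $E^+$ finite, note that the abscissae freeze while $\step{\quality_-}{n}\to+\infty$ forces $\step{\Area}{n}$ to converge to $\sum_i (x_{i+1}-x_i)(\Fdag(x_{i+1})-\Fdag(x_i))>0$, whence $\step{\WA}{n}\to+\infty$, a contradiction. Your telescoping justification of the strict positivity of the limiting area is a welcome elaboration of a step the paper merely asserts from monotonicity and non-constancy of $\Fdag$.
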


\begin{proof}
\revised{Let us assume that $E^+$ is finite: $\exists N$ such that $\forall n\geq N$, $n\in E^-$. Therefore we are in the situation $\smash{\step{\WA}{n}}<\ER$, the minimum quality $\step{\quality_-}{n}$ of the data goes to infinity, and the total area $\smash{\step{\Area}{n}}$ is modified although the evaluation points $\smash{\{\step{x_i}{n}\}_{i=1}^{\step{I}{n}}}$ and their number $\smash{\step{I}{n}}$ are unchanged; thus they are independent of $n$.
Repeating this step yields
\begin{equation*}	
	\lim_{n \to \infty}\step{\Area}{n}=\sum_{i=1}^{I-1}(x_{i+1}-x_i)(\Fdag(x_{i+1})-\Fdag(x_i))=\Area>0
\end{equation*}
since $\Fdag$ is monotonically increasing and non-constant on $[a,b]$, and \Cref{ass:Fnum2Fdag} is used. Consequently $\smash{\step{\WA}{n}\to+\infty}$ as $n \to \infty$, that is $\smash{\step{\WA}{n}}\geq\ER$ $\forall n\geq N_1$ for some $N_1$, which is a contradiction.}
\end{proof}

\revised{The set $E^+$ is therefore of the form
\begin{equation*}
	E^+=\bigcup_{k\geq1}\bbracket{m_k,n_k}\,,
\end{equation*}
where
\begin{equation*}
	\bbracket{m_k,n_k} \defeq \{n\in\Naturals\,;\;m_k\leq n\leq n_k\}\,.
\end{equation*}

Let us introduce the strictly increasing application $\subseq:\Naturals\to\Naturals$ such that $\subseq(p)$ is the $p$\textsuperscript{th} element of $E^+$ (in increasing order), and $\bbracket{m_k,n_k}=\subseq(\bbracket{p_k+1,p_{k+1}})$. $p$ is the counter of the elements of $E^+$, and $n$ is the corresponding iteration number.}

\begin{Proposition}\label{prop:sqrt_rectangle}
\revised{Let $\step{\Nobs}{\subseq(p)}=\step{\Nobs}{\subseq(0)}+p$.
Then
	\begin{equation*}
		\step{\Area}{\subseq(p)} = \sum_{i=1}^{\step{\Nobs}{\subseq(p)}-1} \step{\area_i}{\subseq(p)} = \mathrm{O}\left(\frac{1}{\sqrt{p}}\right)
	\end{equation*}
as $p \to \infty$, and $\step{\Area}{n}\to 0$ as $n\to 0$.}
\end{Proposition}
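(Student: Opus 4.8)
The plan is to track how the total area $\step{\Area}{n}=\sum_i\step{\area_i}{n}$ responds to a single subdivision step. The decisive geometric fact is that inserting $x_{\mathrm{new}}=\demi(\step{x_{i_+}}{n}+\step{x_{i_++1}}{n})$ at the \emph{midpoint in $x$} of the largest rectangle halves that rectangle's area, irrespective of the observed height $\yobs_{\mathrm{new}}$. Indeed, by \Cref{assump:consistent} one has $\step{\yobs_{i_+}}{n}\le\yobs_{\mathrm{new}}\le\step{\yobs_{i_++1}}{n}$, so the vertical line $x=x_{\mathrm{new}}$ and the horizontal line $y=\yobs_{\mathrm{new}}$ cut the largest rectangle into four parts; the two surviving as genuine data-rectangles (lower-left and upper-right) have common width $\demi(\step{x_{i_++1}}{n}-\step{x_{i_+}}{n})$ and heights summing to $\step{\yobs_{i_++1}}{n}-\step{\yobs_{i_+}}{n}$, hence total area exactly $\demi\step{\area_+}{n}$. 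Therefore, at every step $n\in E^+$,
\begin{equation*}
	\step{\Area}{n+1}=\step{\Area}{n}-\demi\step{\area_+}{n}.
\end{equation*}

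First I would turn this identity into a contraction. Since $\step{\area_+}{n}$ is the maximum of the $\step{\Nobs}{n}-1$ rectangle areas summing to $\step{\Area}{n}$, it is at least their average, $\step{\area_+}{n}\ge\step{\Area}{n}/(\step{\Nobs}{n}-1)$, whence
\begin{equation*}
	\step{\Area}{n+1}\le\step{\Area}{n}\Bigl(1-\frac{1}{2(\step{\Nobs}{n}-1)}\Bigr),\qquad n\in E^+.
\end{equation*}
Writing $A_p\defeq\step{\Area}{\subseq(p)}$ and using $\step{\Nobs}{\subseq(p)}=\step{\Nobs}{\subseq(0)}+p$, this yields the recursion $A_{p+1}\le A_p\bigl(1-\tfrac{1}{2(\step{\Nobs}{\subseq(0)}+p-1)}\bigr)$ along the enumeration $\subseq$ of $E^+$.

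The main obstacle is that $\subseq(p)$ and $\subseq(p+1)$ need not be consecutive integers: between two $E^+$ steps there may be a run of $E^-$ steps, during which the nodes are frozen but the worst observation is recomputed to a larger value, so the current area can in principle \emph{increase}, breaking the recursion across block boundaries. I would control this as follows. Within any maximal block $\bbracket{m_k,n_k}\subset E^+$ the recursion above holds verbatim for the current areas. Across a boundary I would use that every $E^-$ step satisfies $\step{\Area}{n}<\ER/\step{\quality_-}{n}$, together with \Cref{ass:Fnum2Fdag}: as $\step{\quality_-}{n}\to\infty$ the frozen observations converge uniformly to $\Fdag$ at the fixed current nodes, so the current area converges to the exact-value area $\sum_i(x_{i+1}-x_i)(\Fdag(x_{i+1})-\Fdag(x_i))$, for which the midpoint subdivision halves the split rectangle \emph{exactly} by the same computation as above. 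Passing to this exact-value area removes the spurious fluctuations and preserves the contraction across boundaries; the delicate point to check is that in the high-quality regime the largest current rectangle agrees with the largest exact-value one, which follows again from \Cref{ass:Fnum2Fdag}.

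Finally I would solve the recursion and assemble the two regimes. Using $\log(1-t)\le-t$,
\begin{equation*}
	\log A_p\le\log A_0-\demi\sum_{j=0}^{p-1}\frac{1}{\step{\Nobs}{\subseq(0)}+j-1}=\log A_0-\demi\log p+\mathrm{O}(1),
\end{equation*}
so $A_p=\step{\Area}{\subseq(p)}=\mathrm{O}(p^{-1/2})$, which is the asserted rate since $\subseq$ enumerates all of $E^+$ and $E^+$ is infinite by \Cref{prop:Ep}. For the remaining limit (the exponent in the statement should read $n\to\infty$), on $E^-$ one has directly $\step{\Area}{n}<\ER/\step{\quality_-}{n}\to0$ by \Cref{ass:q_increasing}, while on $E^+$ the rate just proved gives $\step{\Area}{n}\to0$; since $E^+\cup E^-=\Naturals$, this yields $\step{\Area}{n}\to0$ as $n\to\infty$.
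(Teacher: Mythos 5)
Your core argument is the same as the paper's: the midpoint insertion halves the largest rectangle (using consistency to place $\yobs_{\mathrm{new}}$ between its neighbours, so the lower-left and upper-right sub-rectangles have total area $\demi\step{\area_+}{n}$), hence $\step{\Area}{n+1}=\step{\Area}{n}-\demi\step{\area_+}{n}$ on $E^+$; the averaging bound $\step{\area_+}{n}\geq\step{\Area}{n}/(\step{\Nobs}{n}-1)$ turns this into a contraction; and the telescoped product is estimated through the harmonic sum to give the $p^{-1/2}$ rate. Your use of $\log(1-t)\leq -t$ is a slightly more economical route to the exponent $-\demi$ than the paper's explicit computation with $\alpha=\step{\Nobs}{\subseq(0)}-\tfrac{3}{2}$ and $\beta=\step{\Nobs}{\subseq(0)}-1$, and your derivation of the final limit on $E^-$ from $\step{\Area}{n}<\ER/\step{\quality_-}{n}\to 0$ via \Cref{ass:q_increasing} is clean and arguably tidier than the paper's.

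The one place you genuinely diverge is the block boundary between $\subseq(p_{k+1})+1$ and $\subseq(p_{k+1}+1)$. The paper simply asserts that $\step{\Area}{n}$ is kept constant throughout each $E^-$ run, so that $\step{\Area}{\subseq(p_{k+1})+1}=\step{\Area}{\subseq(p_{k+1}+1)}$ and the recursion chains across blocks; you correctly observe that recomputing $\yobs_{\step{i_-}{n}}$ perturbs the two adjacent rectangles by amounts that cancel only when the adjacent widths are equal, so the area can drift upward. However, your proposed repair does not close this hole. \Cref{ass:Fnum2Fdag} is a statement about the limit $\quality\to\infty$; each $E^-$ block is a finite run occurring at finite $n$ with finite qualities, so uniform convergence to the ``exact-value area'' cannot bound the area increment incurred across a \emph{specific} block, and the claim that the largest current rectangle coincides with the largest exact-value one in a ``high-quality regime'' is neither proved nor needed at finite $n$. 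What one actually needs is a quantitative bound on the cumulative upward drift of $\step{\Area}{n}$ over all $E^-$ steps (e.g.\ controlling $\sum_n\delta_n\cdot\step{\Delta}{n}$ where $\delta_n$ is the increment of the recomputed observation), and neither your argument nor the paper's supplies one. So you have identified a real weak point of the published proof, but replaced its unjustified assertion with a different unjustified assertion rather than a proof.
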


\begin{proof}
\revised{Let $k\geq 1$ and $n=\subseq(p)\in\smash{\bbracket{m_k,n_k}}$, where $p\in\smash{\bbracket{p_k+1,p_{k+1}}}$. Let $\smash{\step{\Area}{n}}$ be given by \Cref{eq:qm-area}, $\smash{\step{\area_+}{n}}$ be given y \Cref{eq:ap}, and $\smash{\step{i_+}{n}}$ be given by \Cref{eq:ip}}.
	At iteration $n+1$ one has:
	\begin{equation*}
		\step{x_i}{n+1}=
		\begin{cases}
			\step{x_i}{n} & \text{for $1\leq i \leq\step{i_+}{n}$,} \\
			\dfrac{1}{2} \left(\step{x_{\step{i_+}{n}}}{n}+\step{x_{\step{i_+}{n}+1}}{n}\right) & \text{for $i=\step{i_+}{n}+1$,} \\
			\step{x_{i-1}}{n} & \text{for $\step{i_+}{n}+2\leq i \leq\step{I}{n+1}$.}
		\end{cases}
	\end{equation*}
	
	\begin{figure}
		\centering
		\includegraphics[width=0.50\linewidth]{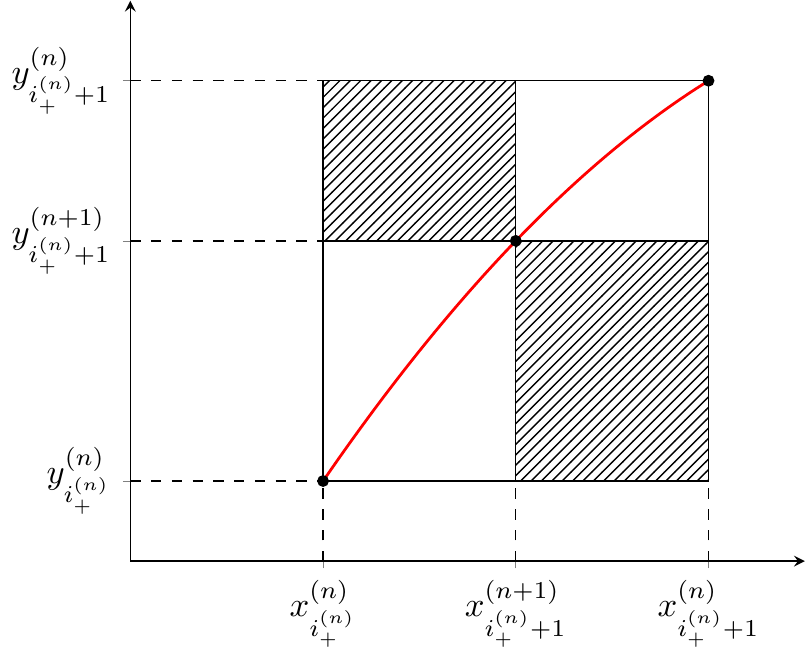}
		\caption{New area when one adds a point at the middle of the biggest rectangle.}
		\label{fig:area_proof}
	\end{figure}
	
	Also $\step{\yobs_i}{n+1}\leq\step{\yobs_{i+1}}{n+1}$ for $1\leq i\leq\step{I}{n+1}-1$.
	One may check that $\step{\area_+}{n}=2\step{\area_{\step{i_+}{n}}}{n+1}+2\step{\area_{\step{i_+}{n}+1}}{n+1}$ (see \Cref{fig:area_proof}) and therefore:
	\begin{equation}
		\label{eq:Anp1}
		\step{\Area}{n+1} =\step{\Area}{n}-\step{\area_+}{n}+\step{\area_{\step{i_+}{n}}}{n+1}+\step{\area_{\step{i_+}{n}+1}}{n+1} \\
		=\step{\Area}{n}-\demi\step{\area_+}{n}\,.
	\end{equation}
	Besides $\step{\Area}{n}\leq(\step{\Nobs}{n}-1)\step{\area_+}{n}$ so that one has:
	\begin{align*}
		\step{\Area}{n+1}
		&\leq \step{\Area}{n} -\frac{\step{\Area}{n}}{2(\step{\Nobs}{n}-1)} \\
		&\leq \step{\Area}{n} \left(\frac{2(\step{\Nobs}{n}-1)-1}{2(\step{\Nobs}{n}-1)}\right)\,,
	\end{align*}
\revised{or:}
\begin{equation}\label{eq:interm1}
\revised{\step{\Area}{\subseq(p)+1}\leq\step{\Area}{\subseq(p)} \left(\frac{2(\step{\Nobs}{\subseq(p)}-1)-1}{2(\step{\Nobs}{\subseq(p)}-1)}\right)\,.}
\end{equation}
\revised{At this stage two situations arise:
\begin{itemize}
\item either $p\in\smash{\bbracket{p_k+1,p_{k+1}-1}}$, in which case $\subseq(p)+1=\subseq(p+1)$;
\item or $p=p_{k+1}$, in which case by our algorithm $\step{\Area}{n}$ is kept constant from $n=n_k+1$ to $n=m_{k+1}$; that is $\smash{\step{\Area}{{n_k}+1}}=\smash{\step{\Area}{{m_{k+1}}}}$, or:
\begin{equation*}
\revised{\step{\Area}{\subseq({p_{k+1}})+1}=\step{\Area}{\subseq({p_{k+1}}+1)}\,.}
\end{equation*}
\end{itemize}}
\revised{The choice of $k$ being arbitrary, one concludes that \Cref{eq:interm1} also reads $\forall p\in\Naturals$:
\begin{align*}
\step{\Area}{\subseq(p+1)} &\leq\step{\Area}{\subseq(p)} \left(\frac{2(\step{\Nobs}{\subseq(p)}-1)-1}{2(\step{\Nobs}{\subseq(p)}-1)}\right) \\
&\leq\step{\Area}{\subseq(p)} \left(\frac{2(\step{\Nobs}{\subseq(0)}+p-1)-1}{2(\step{\Nobs}{\subseq(0)}+p-1)}\right)\,.
\end{align*}}
\revised{Thus:
	\begin{align*}
		\step{\Area}{\subseq(p)}
		&\leq \step{\Area}{\subseq(1)} \prod_{i=1}^{p-1}\left(\frac{2(\step{\Nobs}{\subseq(0)}+i-1)-1}{2(\step{\Nobs}{\subseq(0)}+i-1)}\right) \\
		&\leq \step{\Area}{\subseq(1)} \prod_{i=1}^{p-1}\left(\frac{1+\frac{\alpha}{i}}{1+\frac{\beta}{i}}\right)\,,
	\end{align*}	
	letting $\alpha=\step{\Nobs}{\subseq(0)}-\frac{3}{2}$ and $\beta=\step{\Nobs}{\subseq(0)}-1$.}
	However,
	\begin{equation*}
		\sum_{i=1}^p\log\left(1+\frac{\alpha}{i}\right)=\alpha\sum_{i=1}^p\frac{1}{i}+\revised{C''_p}
	\end{equation*}
	where \revised{$\lim_{p \to \infty}C''_p=C''$}, and
	\begin{equation*}
		\sum_{i=1}^p\frac{1}{i} = \log p+\gamma+\revised{\epsilon'_p}\,,
	\end{equation*}
	where $\gamma$ is the Euler constant and \revised{$\lim_{p \to \infty} \epsilon'_p=0$}.
	Consequently:
	\begin{align*}
		\sum_{i=1}^{p-1}\log\left(1+\frac{\alpha}{i}\right)-\sum_{i=1}^{p-1}\log\left(1+\frac{\beta}{i}\right) &= (\alpha-\beta)\log(p-1) + C'_p \\
		&=(\alpha-\beta)\left[\log p+\log\left(1-\frac{1}{p}\right)\right]+C'_p  \\
		&=\log\left(\frac{1}{\sqrt{p}}\right)+C_p\,,
	\end{align*}
	since $\alpha-\beta=-\demi$;
	again \revised{$C_p$ and $C'_p$ are sequences with constant limits $\lim_{p \to \infty}C_p=C$ and $\lim_{p \to \infty}C'_p=C'$}.
	Therefore,
	\begin{equation*}
		\prod_{i=1}^{p-1} \left( \frac{1+\frac{\alpha}{i}}{1+\frac{\beta}{i}} \right) = \frac{\revised{\mathcal{C}}}{\sqrt{p}}(1+\epsilon_p)
	\end{equation*}
	where \revised{$\mathcal {C}$} is a constant, and $\lim_{p \to \infty}\epsilon_p=0$.
	\revised{One also concludes that $\step{\Area}{n}$, which is either kept constant or equal to $\step{\Area}{\subseq(p)}$, converges to $0$ as $n\to\infty$}.
	Hence the claimed results hold.
\end{proof}

\begin{Proposition}\label{prop:Em}
\revised{$E^-$ is infinite.}
\end{Proposition}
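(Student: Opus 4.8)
The plan is to argue by contradiction, in a manner dual to the proof of \Cref{prop:Ep}: there the contradiction arose because the total area stayed bounded below, whereas here it will arise from \Cref{ass:q_increasing}. Suppose, then, that $E^-$ is finite. Then there exists $N$ with $\step{\WA}{n}\geq\ER$ for every $n\geq N$, so that from step $N$ onward \Cref{our_algorithm} always executes the ``add a point'' branch (the $\step{\WA}{n}\geq\ER$ case) and never enters the branch that improves the worst-quality point. The goal is to show that this pure-insertion regime is incompatible with $\step{\quality_-}{n}\to+\infty$.

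The key step is to establish that, under pure insertion, the minimum quality $\step{\quality_-}{n}$ cannot grow. When a point is inserted at the midpoint of the largest rectangle, the positions and observations of all pre-existing points are left unchanged, and hence so are their qualities; the only quality introduced is $\quality_\text{new}$, that of the added point. Consequently the new minimum is taken over the previous qualities together with a single additional value, giving $\step{\quality_-}{n+1}=\min\{\step{\quality_-}{n},\quality_\text{new}\}\leq\step{\quality_-}{n}$. Thus $(\step{\quality_-}{n})_{n\geq N}$ is non-increasing and bounded above by $\step{\quality_-}{N}<\infty$, which directly contradicts \Cref{ass:q_increasing}. Therefore $E^-$ is infinite.

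The step requiring the most care is the claim that the qualities of the pre-existing points are genuinely frozen during an insertion step; in particular one must rule out that the consistency-restoration sweep raises the running minimum by recomputing, and thereby improving, the worst point. Here I would invoke \Cref{assump:consistent}: every freshly computed observation is consistent, so inserting $x_\text{new}$ triggers no recomputation of the existing points, and the running minimum can only be left unchanged or lowered by the newly introduced quality. Closing this one gap turns the otherwise immediate monotonicity argument into a rigorous contradiction, and no appeal to \Cref{prop:sqrt_rectangle} or to the limiting value of $\step{\Area}{n}$ is needed.
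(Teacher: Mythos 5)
Your proof is correct, but it reaches the contradiction by a genuinely different route. Both arguments begin the same way: if $E^-$ were finite, the algorithm would eventually execute only insertion steps, and (as you carefully justify via \Cref{assump:consistent}) such steps leave the qualities of pre-existing points untouched, so $\step{\quality_-}{n+1}=\min\{\step{\quality_-}{n},\quality_\text{new}\}\leq\step{\quality_-}{n}$. At that point you contradict \Cref{ass:q_increasing} directly, whereas the paper instead keeps only the weaker consequence that $\step{\quality_-}{n}$ stays bounded, invokes \Cref{prop:sqrt_rectangle} to get $\step{\Area}{n}\to 0$, and concludes $\step{\WA}{n}\to 0$, contradicting $\step{\WA}{n}\geq\ER$. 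Your version is shorter and avoids the $\mathrm{O}(1/\sqrt{p})$ decay entirely; it also makes explicit the (correct) observation that the minimum quality can only decrease or stay fixed under insertion, which the paper states a little loosely as ``kept unchanged.'' What the paper's route buys in exchange is independence from \Cref{ass:q_increasing}: since that assumption is itself plausible only because the algorithm keeps re-entering $E^-$ to improve the worst point, deriving ``$E^-$ is infinite'' without leaning on it keeps the logical narrative cleaner, even though your use of it is formally unobjectionable given that the paper states it as a standing assumption. Both proofs are valid under the paper's hypotheses.
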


\begin{proof}
\revised{Let us assume that $E^-$ is finite: $\exists N$ such that $\forall n\geq N$, $n\in E^+$. Therefore we are in the situation $\smash{\step{\WA}{n}}\geq\ER>0$, and $\subseq(n)$ has the form $\subseq(n)=\smash{n-n_0}$, $n\geq N$ for some $\smash{n_0}\in\Naturals$. From \Cref{prop:sqrt_rectangle}:
\begin{equation*}
\step{\Area}{n-n_0}=\mathrm{O}\left(\frac{1}{\sqrt{n}}\right)\,,
\end{equation*}
thus $\smash{\step{\Area}{n}}\to 0$ and $\smash{\step{\WA}{n}}\to 0$ as $n\to\infty$ since $\smash{\step{\quality_-}{n}}$ is kept unchanged, which is a contradiction.}
\end{proof}

We now provide three results on the convergence of \Cref{our_algorithm}.
As is to be expected, the algorithm can only be shown to converge uniformly when the target response function $\Fdag$ is sufficiently smooth; otherwise, the convergence is at best pointwise or in mean.

\begin{Theorem}[Algorithm convergence]\label{thm:pw_cvg}
	Assume that $\Fdag$ is strictly increasing.
	Then, for any choice of $\ER > 0$, \Cref{our_algorithm} is convergent in the following senses:
	\begin{itemize}
		\item If $\Fdag$ is piecewise continuous on $[a,b]$, then $\smash{\lim_{n \to \infty}\step{\Fapp}{n}(x)} = \smash{\Fdag(x)}$ at all points $x\in[a,b]$ where $\Fdag$ is continuous;
		\item If $\Fdag$ is continuous on $[a,b]$, then convergence holds uniformly: $\|\step{\Fapp}{n} - \Fdag\|_\infty \xrightarrow[n \to \infty]{} 0$.
	\end{itemize}
\end{Theorem}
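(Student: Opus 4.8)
The plan is to reduce everything to two facts: the observations become uniformly accurate, and the mesh of evaluation points becomes uniformly fine. First I would record that, by \Cref{ass:Fnum2Fdag} together with \Cref{ass:q_increasing}, the observation errors vanish uniformly: since every quality satisfies $\step{\quality_i}{n}\geq\step{\quality_-}{n}\to\infty$, there is a sequence $\eta(n)\to 0$ with $\max_{i}\Absval{\step{\yobs_i}{n}-\Fdag(\step{x_i}{n})}\leq\eta(n)$. In particular, because $\Fnum$ always underestimates, $\step{\Fapp}{n}(x)=\step{\yobs_i}{n}\leq\Fdag(\step{x_i}{n})\leq\Fdag(x)$ whenever $x\in\step{s_i}{n}$, so each reconstruction is a one-sided lower approximation and only the upper bound on the error needs work.

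The crux is a separate lemma asserting that the maximal cell width $h(n)\defeq\max_{i}(\step{x_{i+1}}{n}-\step{x_i}{n})$ tends to $0$. To prove it I would exploit strict monotonicity through $m(\delta)\defeq\inf\{\Fdag(y)-\Fdag(x)\mid x,y\in[a,b],\,y-x\geq\delta\}$, which I claim is strictly positive for every $\delta\in(0,b-a]$: covering $[a,b]$ by finitely many grid cells of width $\leq\delta/2$, any interval of length $\geq\delta$ contains a full grid cell, so $\Fdag(y)-\Fdag(x)$ is bounded below by the minimum increment of $\Fdag$ over those finitely many cells, which is positive by strict increase — crucially this needs no continuity of $\Fdag$. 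Arguing by contradiction, suppose the maximal width stays $\geq\delta$ along a subsequence, so at those steps some cell has width $\geq\delta$. For that cell the area bound $(\step{x_{i+1}}{n}-\step{x_i}{n})(\step{\yobs_{i+1}}{n}-\step{\yobs_i}{n})\leq\step{\Area}{n}$ together with \Cref{prop:sqrt_rectangle} forces the observed jump $\step{\yobs_{i+1}}{n}-\step{\yobs_i}{n}\leq\step{\Area}{n}/\delta\to 0$; combined with the uniform observation accuracy this yields $\Fdag(\step{x_{i+1}}{n})-\Fdag(\step{x_i}{n})\leq\step{\Area}{n}/\delta+\eta(n)\to 0$, contradicting $m(\delta)>0$. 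Hence $h(n)\to 0$.

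With the mesh lemma in hand both statements follow from the same triangle inequality. For pointwise convergence, fix a continuity point $x_{0}$ and let $\step{s_i}{n}$ be the cell containing it; since $h(n)\to 0$ its left endpoint satisfies $\step{x_i}{n}\to x_{0}$, whence $\Absval{\step{\Fapp}{n}(x_{0})-\Fdag(x_{0})}\leq\Absval{\step{\yobs_i}{n}-\Fdag(\step{x_i}{n})}+\Absval{\Fdag(\step{x_i}{n})-\Fdag(x_{0})}\leq\eta(n)+\Absval{\Fdag(\step{x_i}{n})-\Fdag(x_{0})}$, and the second term vanishes by continuity of $\Fdag$ at $x_{0}$. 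For uniform convergence when $\Fdag$ is continuous on the compact interval $[a,b]$, it is uniformly continuous with modulus $\omega$, and the same split gives $\norm{\step{\Fapp}{n}-\Fdag}_{\infty}\leq\eta(n)+\omega(h(n))\to 0$, the endpoint $x=b$ being handled directly since $\step{\Fapp}{n}(b)=\step{\yobs_{\step{\Nobs}{n}}}{n}$ has observation error at most $\eta(n)$.

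I expect the mesh-width lemma to be the main obstacle, and within it the subtle point is establishing $m(\delta)>0$ for a strictly increasing function that may be \emph{discontinuous}; the finite-grid covering argument is what handles this cleanly. The remainder is a matter of assembling the area decay of \Cref{prop:sqrt_rectangle}, the uniform observation accuracy, and strict monotonicity so that a cell cannot simultaneously remain wide and carry vanishing area.
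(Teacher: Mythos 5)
Your proposal is correct and shares the paper's overall skeleton: uniform observation accuracy from \Cref{ass:Fnum2Fdag} and \Cref{ass:q_increasing}, a lemma that the mesh width tends to zero, and the same final triangle inequality split into an observation-error term and a $\Fdag$-increment term (with Heine's theorem upgrading pointwise to uniform convergence in the continuous case). Where you genuinely diverge is in the proof of the mesh lemma. The paper argues softly: if some cell $\step{s_j}{n}$ were never subdivided, its area would converge to $(x_{j+1}-x_j)(\Fdag(x_{j+1})-\Fdag(x_j))>0$, contradicting $\step{\Area}{n}\to 0$ from \Cref{prop:sqrt_rectangle}; it then asserts that $\step{\Delta}{n}$ halves after finitely many steps and hence tends to zero. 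Your route is quantitative: you introduce $m(\delta)=\inf\{\Fdag(y)-\Fdag(x)\mid y-x\geq\delta\}$, prove $m(\delta)>0$ for a strictly increasing (possibly discontinuous) $\Fdag$ by a finite covering argument, and then show directly that a cell of width $\geq\delta$ is incompatible with $\step{\Area}{n}\to 0$ because its observed jump would be at most $\step{\Area}{n}/\delta$ while its true increment is at least $m(\delta)$. This buys you two things the paper leaves implicit: it handles the fact that the offending wide cell may be a \emph{different} cell at each step of the subsequence (the paper's ``never divided'' phrasing quietly assumes a fixed cell persists), and it makes explicit why strict monotonicity without continuity still yields a uniform positive lower bound on increments over intervals of fixed length. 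The paper's version is shorter but relies on the reader accepting the eventual-subdivision and halving claims; yours is self-contained at the cost of the extra $m(\delta)$ machinery. Both correctly reduce to \Cref{prop:sqrt_rectangle} plus the two standing assumptions, so I see no gap in your argument.
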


\begin{proof}
	\revised{Let $\ER > 0$. We know from \Cref{prop:Ep,prop:Em} that $\step{\WA}{n}$ will oscillate about $\ER$ in the iterating process as $n\to\infty$, while $\smash{\lim_{n \to \infty}\step{\quality_-}{n}} = +\infty$ from \Cref{ass:q_increasing}. Furthermore, let}
	\begin{equation*}
	\revised{\step{\Delta}{n} \defeq \sup_{1\leq i\leq\step{\Nobs}{n}-1}\Absval{\step{x_{i+1}}{n}-\step{x_i}{n}}\,.}
	\end{equation*}
	\revised{Assuming for example that for some $j$, $\smash{\step{s_j}{n}}=\smash{[\step{x_j}{n},\step{x_{j+1}}{n})}$ is never divided in two in the iteration process and is thus independent of $n$, it turns out that $\smash{\step{\area_j}{n}} \to \smash{(x_{j+1}-x_{j})(\Fdag(x_{j+1})-\Fdag(x_j))}>0$ as $n \to \infty$, which is impossible because $\smash{\step{\Area}{n}}$ goes to $0$ as $n \to \infty$ from \Cref{prop:sqrt_rectangle}. Therefore there exists some $m\in\Naturals^*$ (depending on $n$) such that $\smash{\step{\Delta}{n+m}}\leq\smash{\demi\step{\Delta}{n}}$; also the sequence $\smash{\step{\Delta}{n}}$ is decreasing, hence $\smash{\step{\Delta}{n}} \to 0$ as $n \to \infty$.}
\revised{Now let $x\in[\step{x_i}{n},\step{x_{i+1}}{n})$. Then:
	\begin{align*}
		\Absval{\step{\Fapp}{n}(x)-\Fdag(x)} &= \Absval{\Fnum(\step{x_i}{n},\step{\quality_i}{n})-\Fdag(x)} \\
		&\leq \Absval{\Fnum(\step{x_i}{n},\step{\quality_i}{n})-\Fdag(\step{x_i}{n})}+\Absval{\Fdag(\step{x_i}{n})-\Fdag(x)}\,.
	\end{align*}	
	But $\smash{\step{x_i}{n}}\to x$ as $n \to \infty$ because $\smash{\step{\Delta}{n}}\to 0$; thus if $\Fdag$ is continuous at $x$, the second term on the right hand side above goes to $0$ as $n \to \infty$. However, if $\Fdag$ is continuous everywhere on $[a,b]$, it is in addition uniformly continuous on $[a,b]$ by Heine's theorem, and the second term goes to $0$ as $n \to \infty$ uniformly on $[a,b]$.
	Finally, invoking \Cref{ass:Fnum2Fdag}, the first term on the right hand side above also tends to $0$ as $n \to \infty$.
	This completes the proof.}
\end{proof}

\begin{Proposition}[Convergence in mean]\label{prop:convmean}
	Let $\Fdag\colon[a,b]\to\Reals$ be piecewise continuous.
	Then \Cref{our_algorithm} is convergent in mean in the sense that
	\begin{equation*}
		\|\step{\Fapp}{n} - \Fdag\|_{1}  \xrightarrow[n \to \infty]{} 0.
	\end{equation*}
\end{Proposition}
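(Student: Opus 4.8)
The plan is to obtain $L^{1}$ convergence from the pointwise convergence already established in \Cref{thm:pw_cvg}, via the dominated convergence theorem. First, since $\Fdag$ is (strictly increasing and) piecewise continuous on $[a,b]$, its set of discontinuities $D\subset[a,b]$ is finite, hence Lebesgue-null. The first bullet of \Cref{thm:pw_cvg} then gives $\step{\Fapp}{n}(x)\to\Fdag(x)$ for every $x\in[a,b]\setminus D$, so that $\step{\Fapp}{n}\to\Fdag$ pointwise almost everywhere on $[a,b]$.

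Next I would produce an integrable dominating function by bounding $\Absval{\step{\Fapp}{n}(x)-\Fdag(x)}$ uniformly in $x$ and in $n$ (for $n$ large). Being increasing, $\Fdag$ satisfies $\Fdag(a)\leq\Fdag(x)\leq\Fdag(b)$, so $\Absval{\Fdag(x)}\leq\max(\Absval{\Fdag(a)},\Absval{\Fdag(b)})$. For the reconstruction, the one-sided error assumption $\Fnum(x,\quality)\leq\Fdag(x)$ gives $\step{\yobs_i}{n}\leq\Fdag(\step{x_i}{n})\leq\Fdag(b)$, whence $\step{\Fapp}{n}(x)\leq\Fdag(b)$ for all $x$ and $n$. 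For a lower bound I invoke \Cref{ass:q_increasing}: the minimal quality $\step{\quality_-}{n}=\min_{i}\step{\quality_i}{n}\to+\infty$, so by the uniform convergence in \Cref{ass:Fnum2Fdag} (applied with, say, $\epsilon=1$) there is an $N$ such that, for all $n\geq N$, every observation obeys $\step{\yobs_i}{n}=\Fnum(\step{x_i}{n},\step{\quality_i}{n})\geq\Fdag(\step{x_i}{n})-1\geq\Fdag(a)-1$, and hence $\step{\Fapp}{n}(x)\geq\Fdag(a)-1$. Combining the two bounds, for $n\geq N$ we obtain $\Absval{\step{\Fapp}{n}(x)-\Fdag(x)}\leq M$ for all $x\in[a,b]$, with $M$ a constant independent of $n$. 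Since $[a,b]$ has finite length, the constant function $M\indic_{[a,b]}$ is integrable.

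Finally, applying the dominated convergence theorem to the tail sequence $(\Absval{\step{\Fapp}{n}-\Fdag})_{n\geq N}$, which converges to $0$ almost everywhere and is dominated by the integrable $M\indic_{[a,b]}$, yields $\int_{a}^{b}\Absval{\step{\Fapp}{n}(x)-\Fdag(x)}\,\rd x\to 0$, i.e. $\norm{\step{\Fapp}{n}-\Fdag}_{1}\to 0$; the finitely many initial terms $n<N$ do not affect the limit.

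The main obstacle I anticipate is the construction of the dominating function, and specifically the lower bound on $\step{\Fapp}{n}$. The upper bound is immediate from the one-sided error assumption, but the lower bound genuinely requires \Cref{ass:q_increasing} together with the uniform convergence in \Cref{ass:Fnum2Fdag}: a priori $\Fnum$ could undershoot $\Fdag$ by an arbitrary amount at low quality, and it is only because the \emph{minimal} quality diverges that \emph{all} observations are eventually trapped within a fixed distance of the bounded values of $\Fdag$. Once domination is secured, the almost-everywhere convergence inherited from \Cref{thm:pw_cvg}, combined with the null discontinuity set of the piecewise continuous $\Fdag$, makes the dominated convergence argument routine.
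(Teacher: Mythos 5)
Your proof is correct, but it takes a genuinely different route from the paper's. The paper observes that the sequence $\step{\Fapp}{n}$ is \emph{monotone increasing in $n$}: when $\step{\WA}{n}<\ER$ the inner loop of \Cref{our_algorithm} only accepts a re-evaluation with $\yobs_{\text{new}}>\yobs_{\text{old}}$, and when $\step{\WA}{n}\geq\ER$ consistency of the newly inserted point guarantees that its value is at least that of its left neighbour; hence $\step{\Fapp}{n+1}\geq\step{\Fapp}{n}$ pointwise, and the claim follows from the monotone convergence theorem together with the integrability of $\step{\Fapp}{0}$ (and, implicitly, the a.e.\ identification of the pointwise limit with $\Fdag$). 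You instead argue via the dominated (in fact bounded) convergence theorem, manufacturing the uniform lower bound on $\step{\Fapp}{n}$ from \Cref{ass:q_increasing} combined with the uniform convergence in \Cref{ass:Fnum2Fdag}, the upper bound being free from the one-sided error property. Both arguments ultimately rest on the same a.e.\ pointwise convergence inherited from \Cref{thm:pw_cvg}, and hence on its strict-monotonicity hypothesis, which you rightly flag as not literally stated in the proposition. What the paper's route buys is that no quantitative control of the undershoot is needed for the limit-interchange step --- monotonicity plus integrability of the initial reconstruction suffice; what your route buys is that you need not verify the monotonicity of the reconstruction sequence, a structural property of the algorithm that is slightly delicate to check (it hinges on the acceptance rule of the inner while loop and on \Cref{assump:consistent}). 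Note also that once one knows $\step{\Fapp}{n}\geq\step{\Fapp}{0}$, the lower bound in your domination argument comes for free, so the two mechanisms are essentially interchangeable here; your anticipated ``main obstacle'' dissolves under the paper's observation.
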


\begin{proof}
	We can check that the sequence $\step{\Fapp}{n}$ is monotone.
	Indeed, if $\step{\WA}{n} < \ER$, then by construction we have
	\begin{equation*}
	\step{\Fapp}{n+1}(x) - \step{\Fapp}{n}(x)\geq\left(\step{\yobs_{\step{i_-}{n}}}{n+1} - \step{\yobs_{\step{i_-}{n}}}{n}\right)\indic_{\step{s_-}{n}}(x) \geq 0
	\end{equation*}
	where $\step{s_-}{n} = \bigl[ \step{x_{\step{i_-}{n}}}{n}, \step{x_{\step{i_-}{n}+1}}{n} \bigr)$.
	However, if $\step{\WA}{n} > \ER$, then consistency implies that
	\begin{equation*}
		\step{\Fapp}{n+1}(x) - \step{\Fapp}{n}(x)\geq\left(\step{\yobs_{\step{i_+}{n}+1}}{n+1}-\step{\yobs_{\step{i_+}{n}}}{n}\right)\indic_{\step{s_+}{n+1}}(x)\geq0
	\end{equation*}
	where $\step{s_+}{n+1} = \bigl[ \step{x_{\step{i_+}{n}+1}}{n+1}, \step{x_{\step{i_+}{n}+2}}{n+1} \bigr)$.
	The claim now follows from the monotone convergence theorem and the fact that $\step{\Fapp}{0}$ is integrable. 
\end{proof}

\section{Test cases}%
\label{sec:tests}%

To show the effectiveness of \Cref{our_algorithm}, we try it on two cases, in which $\Fdag$ is a continuous function and a discontinuous function respectively.
For both cases, the error between the numerical estimate and the ground truth function is modelled as a random variable following a Log-normal distribution.
That is,
\begin{equation}
	\forall x \in [a,b],\;\epsilon(x) \sim \mathrm{Log}\mathcal{N}(\mu(x),\sigma^{2}),
\end{equation}
with $\sigma^{2} = 1$ and $\mu(x)$ is chosen as $\mathbb{P}[0 \leq \epsilon(x) \leq 0.1\cdot \Fdag(x)] = 0.9$.
Thus, the mean $\mu$ is different for each $x \in [a,b]$.

As we have access to the ground truth function and for validation purpose, the quality value associated to a numerical point is the inverse of the relative error.
Moreover, we assume that the initial points are consistent.

For illustrative purposes, we set the parameter $\ER = 15$ for the examples considered below.

\subsection{\texorpdfstring{$\Fdag$}{F-dagger} is a continuous function}
\label{Fcon}

First, consider the function $\Fdag \in C^{0}([1,2], [1,2])$ defined as follows:
\begin{equation*}
	\Fdag(x)=\begin{cases}
	\Fdag_1(x) & \mbox{if }x \in[1,\frac{3}{2}]\,, \\ \Fdag_2(x) & \mbox{if }x \in[\frac{3}{2},2]\,,
	\end{cases}
\end{equation*}
with
\begin{align}
	\label{eq:F1F2}
	F^{\dagger}_{1}(x) &= a_1\exp(x^{3}) + b_1\,, \\
	\notag
	F^{\dagger}_{2}(x) &= a_2\exp((3-x)^{3}) + b_2\,,
\end{align}
where:
\begin{equation*}
	a_1= -\frac{1}{2(\exp(1) - \exp(27/8))}\,,\quad b_1 = \frac{3 - 2\exp(19/8)}{2(1-\exp(19/8))}\,,\quad a_2= -a_1\,,\quad b_2 = 2a_1\exp(27/8) + b_1\,.
\end{equation*}


The target function $\Fdag$ and the reconstructions $\step{\Fapp}{n}$ obtained through the algorithm for several values of the step $n$ are shown on \Cref{fig:Cont_RecFunc}.
For each $n$, the reconstruction $\step{\Fapp}{n}$ is increasing and the initial points are consistent.
The $\infty$-norm and $1$-norm of the error appear to converge to zero with approximate rates $-0.512$ and $-0.534$ respectively.

\begin{figure}
	\centering
	\subfigure[$n = 0$]{\includegraphics[width=0.30\textwidth]{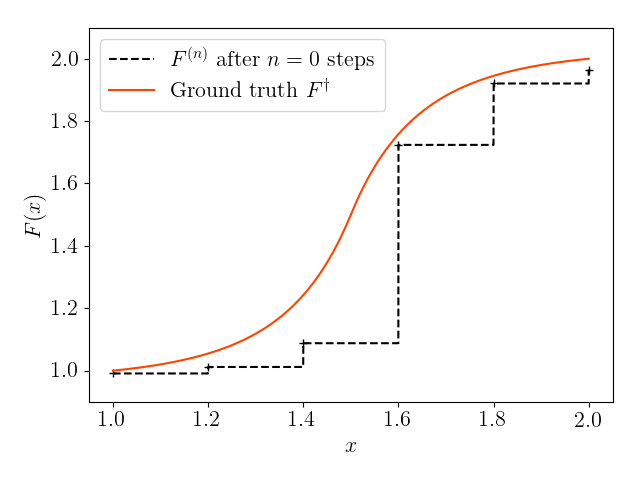}\label{fig:RecFunc_con_step0} }
	\subfigure[$n = 10$]{\includegraphics[width=0.30\textwidth]{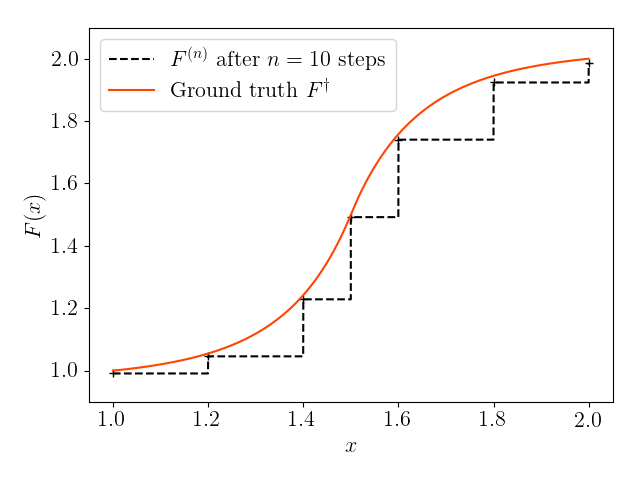}} 
	\subfigure[$n = 100$]{\includegraphics[width=0.30\textwidth]{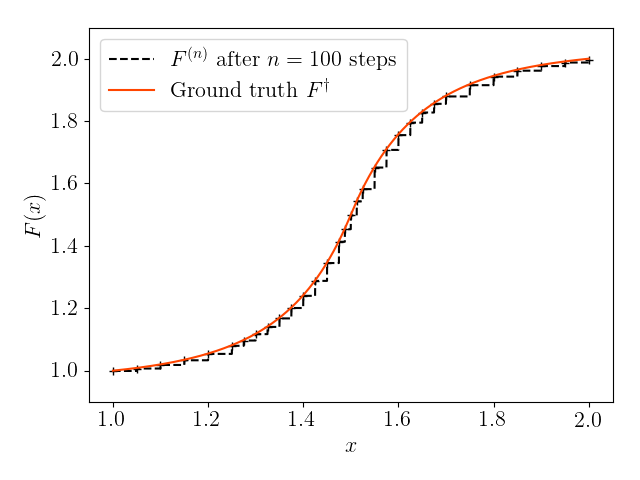}}
	\subfigure[$n = 300$]{\includegraphics[width=0.30\textwidth]{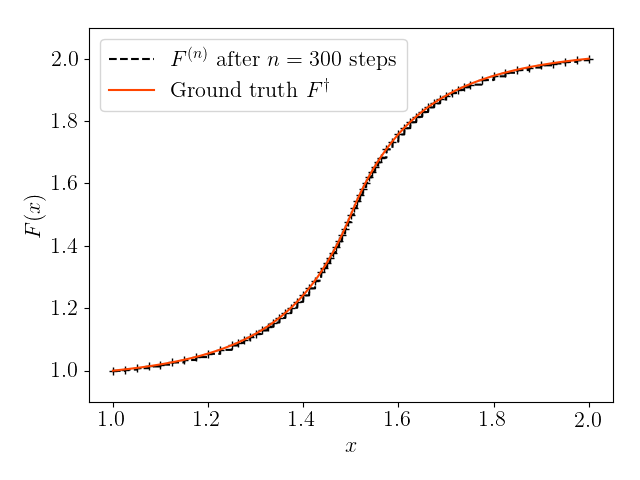}}
	\subfigure[$\infty$-norm error]{\includegraphics[width=0.30\textwidth]{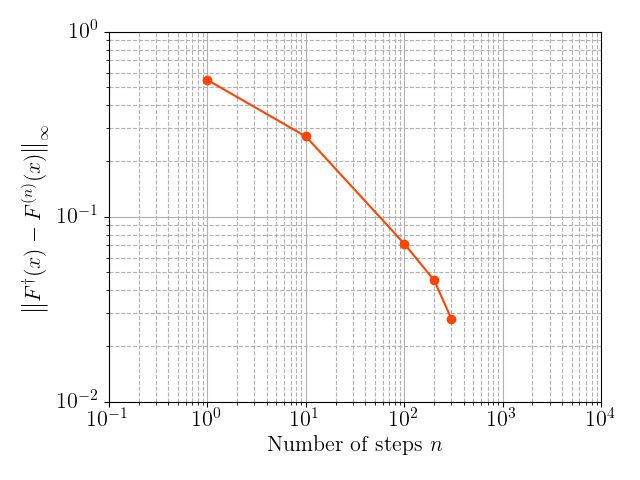}} 
	\subfigure[$1$-norm error]{\includegraphics[width=0.30\textwidth]{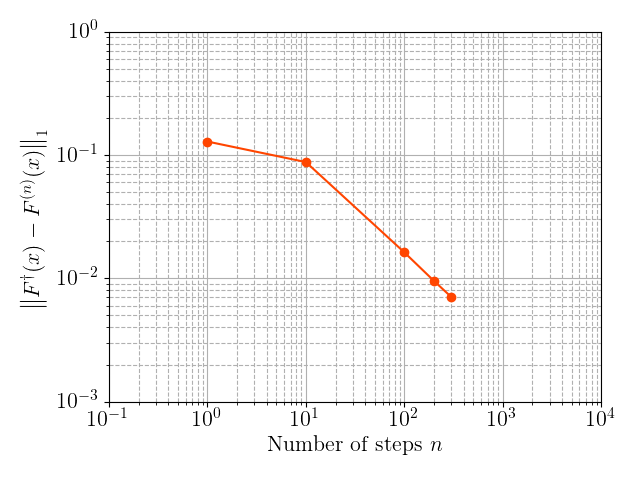}}
	\caption{Evolution of $\step{\Fapp}{n}$ and the $\infty$- and $1$-norms of the error $\Fdag - \step{\Fapp}{n}$ as functions of the iteration count, $n$, for a smooth ground truth $\Fdag$.}
	\label{fig:Cont_RecFunc}
\end{figure}


\subsection{\texorpdfstring{$\Fdag$}{F-dagger} is a discontinuous function}
\label{Fdiscon}

Now, consider the discontinuous function $\Fdag$ defined as follows:
\begin{equation*}
	\Fdag(x)=\begin{cases}
	\Fdag_1 & \mbox{if }x \in[1,\frac{3}{2}]\,, \\ \Fdag_2 & \mbox{if }x \in (\frac{3}{2},2]\,,
	\end{cases}
\end{equation*}
where $\smash{\Fdag_1}$ and $\smash{\Fdag_2}$ are given by \eqref{eq:F1F2}, and:
\begin{align*}
	a_1 & = -\frac{1}{2(\exp(1) - \exp(27/8))}\,, &
	b_1 & = \frac{3 - 2\exp(19/8)}{2(1-\exp(19/8))}\,, \\
	a_2 & = \frac{2}{5(\exp(8) - \exp(27/8))}\,, &
	b_2 & = \frac{ 10 - 8\exp(37/8)}{5(1-\exp(37/8))}\,.
\end{align*}

\begin{figure}[t]
	\centering
	\subfigure[$n = 0$]{\includegraphics[width=0.30\textwidth]{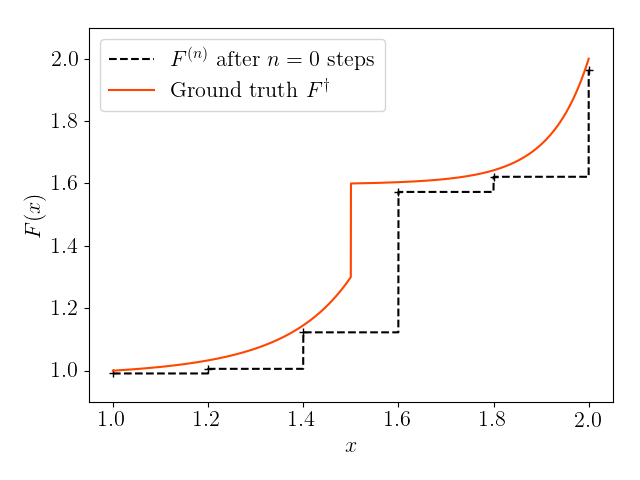}} 
	\subfigure[$n = 10$]{\includegraphics[width=0.30\textwidth]{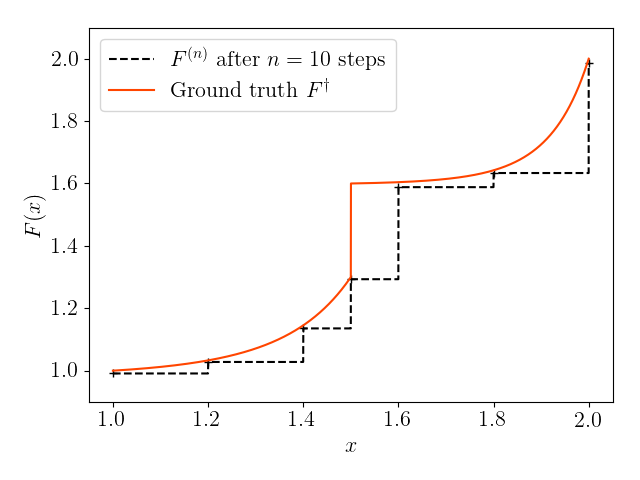}} 
	\subfigure[$n = 100$]{\includegraphics[width=0.30\textwidth]{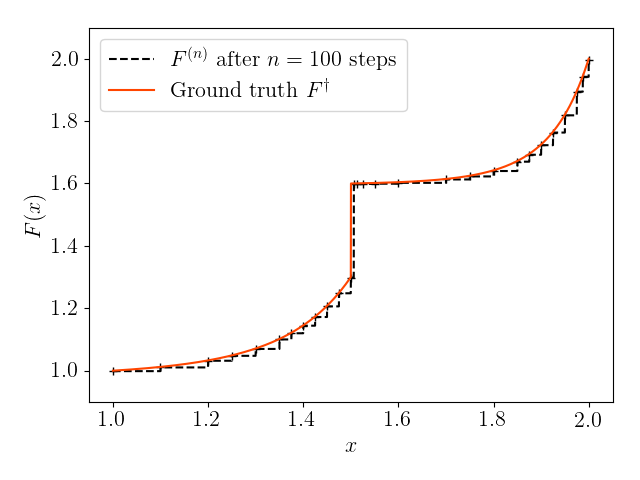}}
	\subfigure[$n = 300$]{\includegraphics[width=0.30\textwidth]{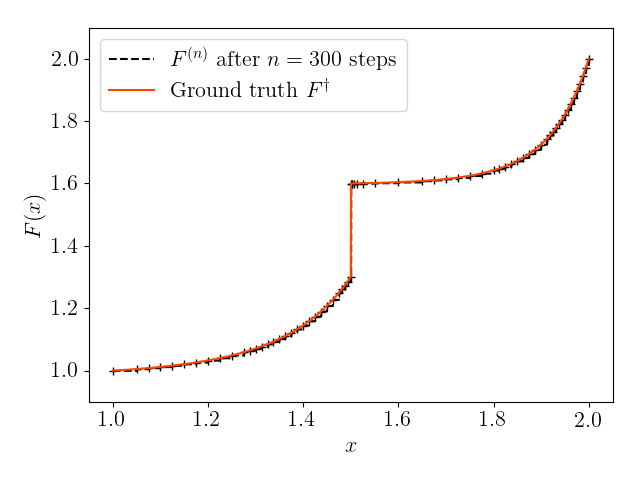}}
	\subfigure[$\infty$-norm error]{\includegraphics[width=0.32\textwidth]{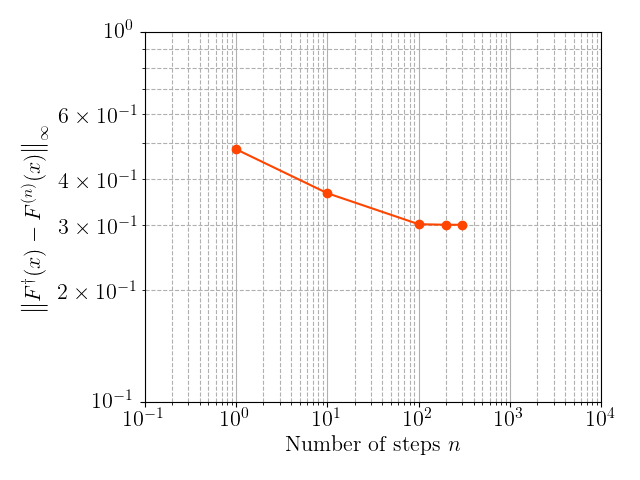}} 
	\subfigure[$1$-norm error]{\includegraphics[width=0.32\textwidth]{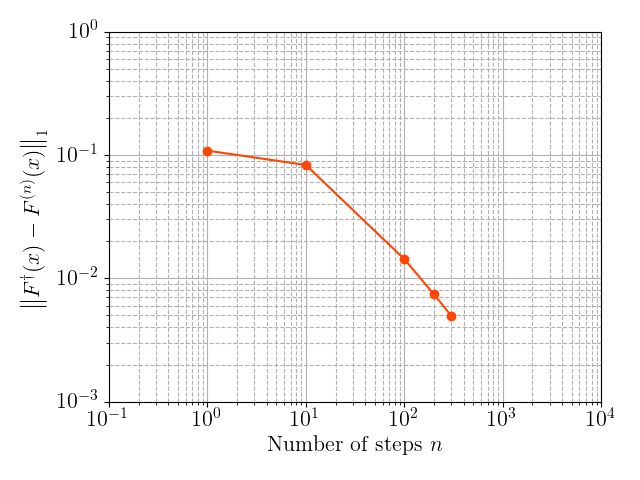}}
	\caption{Evolution of $\step{\Fapp}{n}$ and the $\infty$- and $1$-norms of the error $\Fdag - \step{\Fapp}{n}$ as functions of the iteration count, $n$, for a discontinuous ground truth $\Fdag$.}
	\label{fig:Disc_RecFunc}
\end{figure}

Here, $\Fdag$ is piecewise continuous on $[1,\frac{3}{2}]$ and $]\frac{3}{2},2]$.
In this case, one can apply \Cref{prop:convmean}.
The target function $\Fdag$ and the reconstructions $\step{\Fapp}{n}$ obtained through the algorithm for several values of the step $n$ are shown on \Cref{fig:Disc_RecFunc}.
Observe that the approximation quality, as measured by the $\infty$-norm of the error $\Fdag - \step{\Fapp}{n}$, quite rapidly saturates and does not converge to zero.
This is to be expected for this discontinuous target $\Fdag$, since closeness of two functions in the supremum norm mandates that they have approximately the same discontinuities in exactly the same places.
The $1$-norm error, in contrast, appears to converge at the rate $-0.561$.

\revised{Regarding computational cost, the number of calls to the numerical model is lower when $\Fdag$ is continuous than when it is discontinuous. For both examples above and for the same number of data points, the number of evaluations of the numerical model (analytical formula in the present case) in the discontinuous case is about six times higher than the number of evaluations in the continuous case. This is because the algorithm typically adds more points near discontinuities and the effort of making them consistent increases the number of calls to the model}.

\subsection{Influence of the user-defined parameter \texorpdfstring{$\ER$}{E}}

We consider the case in which $\Fdag$ is discontinuous, as in \Cref{Fdiscon}.
We will show the influence of the choice of the parameter $\ER$ on the reconstruction function $\step{\Fapp}{n}$.

\subsubsection{Case \texorpdfstring{$\ER \ll 1$}{E much less than 1}}
Let us consider the case $\ER = 10^{-4} \ll 1$.
This choice corresponds to the case where one wishes to split over redo the worst quality point.
This can be seen on \Cref{fig:Cont_ER_LOWER} where the worst quality is almost constant over 100 steps while the sum of areas strongly decreases; see \Cref{fig:Cont_ER_LOWER}\subref{fig:minqual_LOWER} and \Cref{fig:Cont_ER_LOWER}\subref{fig:sumarea_LOWER} respectively.
At each step, the algorithm is adding a new point by splitting the biggest rectangle.
One can note on \Cref{fig:Cont_ER_LOWER}\subref{fig:sumarea_LOWER} that the minimum of the quality is not constant.
It means that when the algorithm added a new data point, the point with the worst quality was not consistent any more and had to be recomputed.
In summary, in this case, we obtain more points but with lower quality values.

\begin{figure}[t]
	\centering
	\subfigure[$n = 0$]{\includegraphics[width=0.32\textwidth]{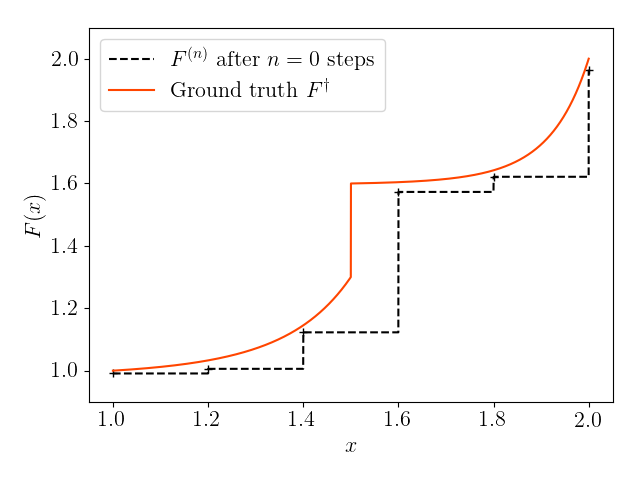}}
	\subfigure[$n = 10$]{\includegraphics[width=0.32\textwidth]{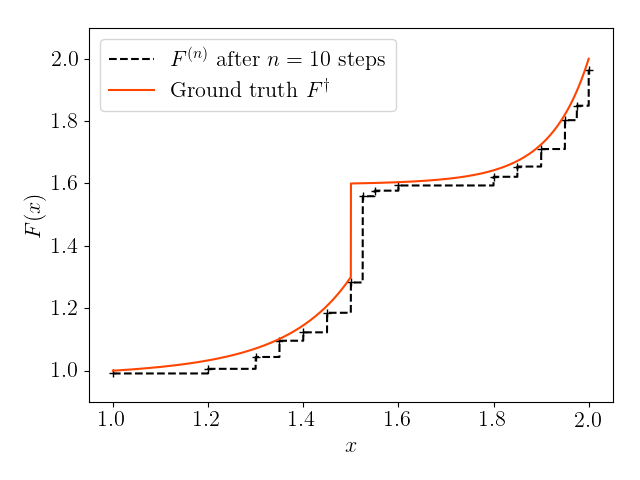}}
	\subfigure[$n = 50$]{\includegraphics[width=0.32\textwidth]{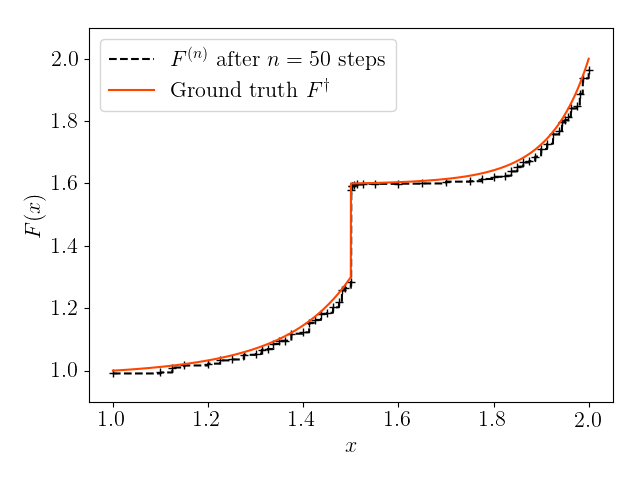}}
	\subfigure[$n = 100$]{\includegraphics[width=0.32\textwidth]{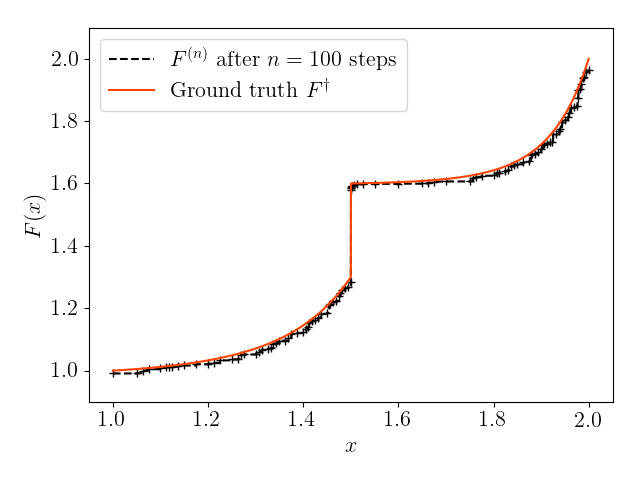}}
	\subfigure[Minimum of the quality]{\includegraphics[width=0.32\textwidth]{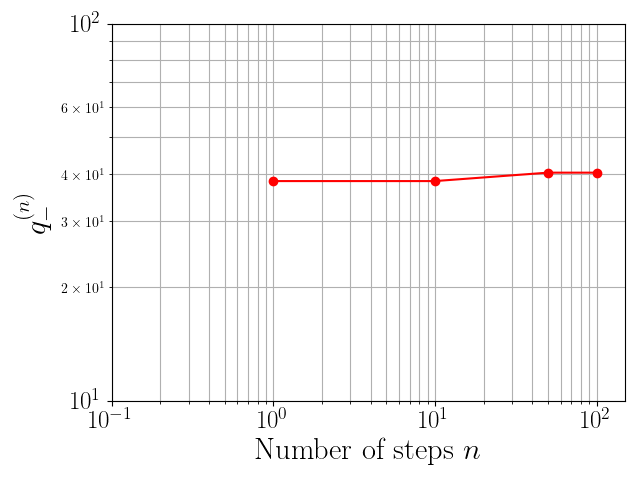}\label{fig:minqual_LOWER} }
	\subfigure[Total area]{\includegraphics[width=0.32\textwidth]{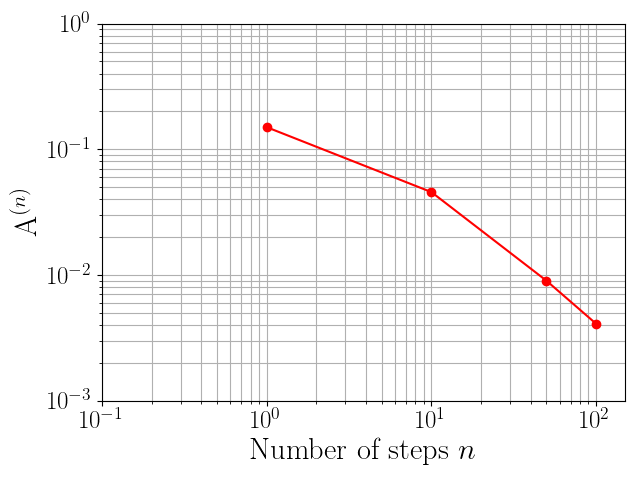}\label{fig:sumarea_LOWER} }
	\caption{Evolution of $\step{\Fapp}{n}$ and the minimum of the quality and the total area as functions of the iteration count, $n$, for a discontinuous ground truth $\Fdag$ with $\ER = 10^{-4}$.}
	\label{fig:Cont_ER_LOWER}
\end{figure}

\subsubsection{Case \texorpdfstring{$\ER \gg 1$}{E much greater than 1}}

We now consider the case $\ER = 10^{4} \gg 1$.
This choice corresponds to the case where one wishes to redo the worst quality point over split.
This can be seen on \Cref{fig:Cont_ER_HIGHER} where the sum of areas stays more or less the same over $100$ steps while the minimum of the quality surges; see \Cref{fig:Cont_ER_HIGHER}\subref{fig:sumarea_HIGHER} and \Cref{fig:Cont_ER_HIGHER}\subref{fig:minqual_HIGHER} respectively.
There is no new point.
The algorithm is only redoing the worst quality point to improve it.
To sum up, we obtain fewer points with higher quality values.

\begin{figure}[t]
	\centering
	\subfigure[$n = 0$]{\includegraphics[width=0.32\textwidth]{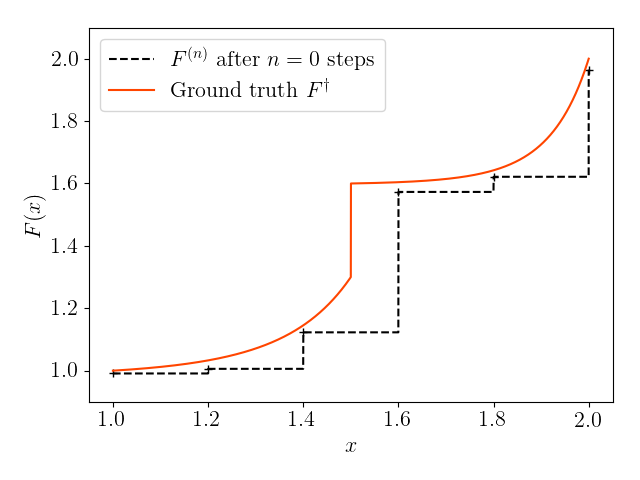} }
	\subfigure[$n = 10$]{\includegraphics[width=0.32\textwidth]{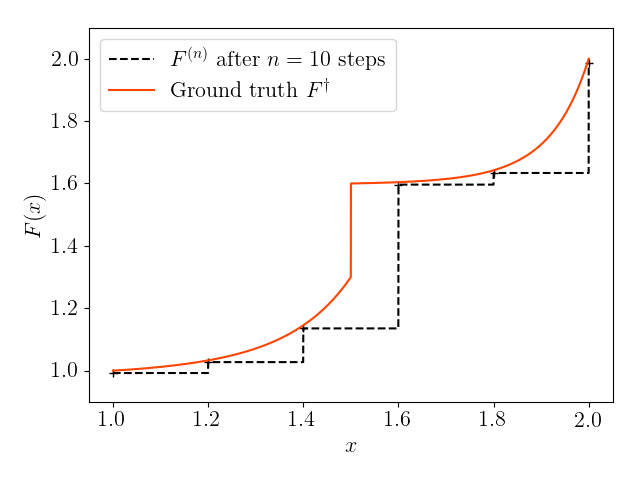} }
	\subfigure[$n = 50$]{\includegraphics[width=0.32\textwidth]{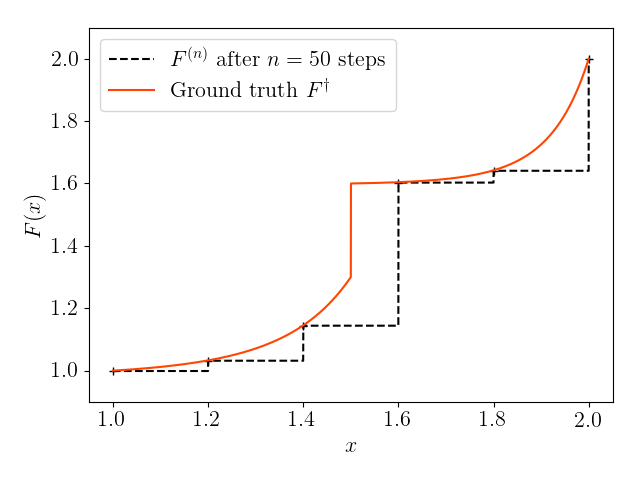} }
	\subfigure[$n = 100$]{\includegraphics[width=0.32\textwidth]{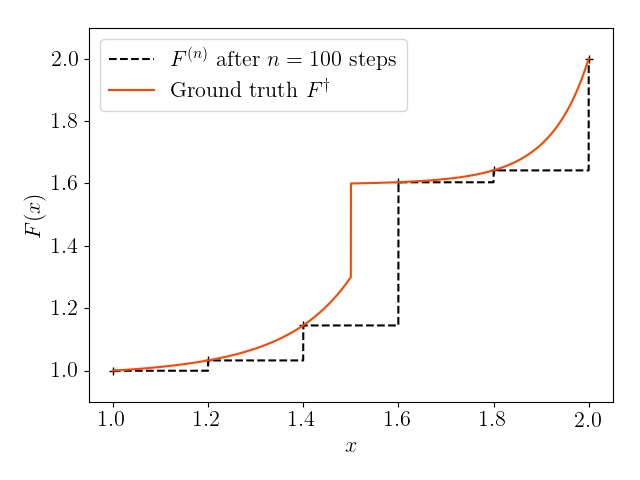} }
	\subfigure[Minimum of the quality]{\includegraphics[width=0.32\textwidth]{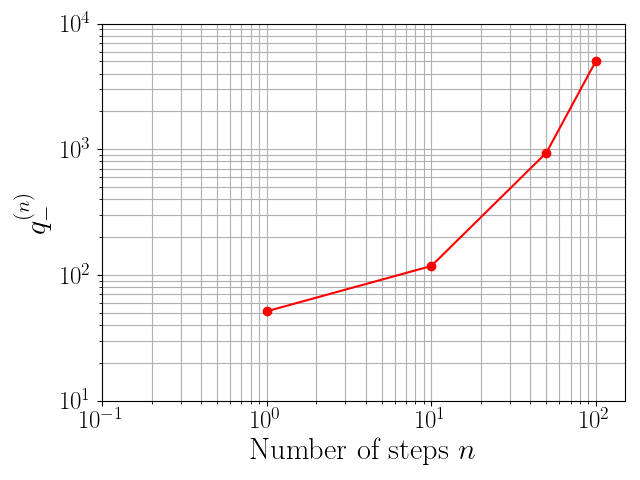}\label{fig:minqual_HIGHER} }
	\subfigure[Total area]{\includegraphics[width=0.32\textwidth]{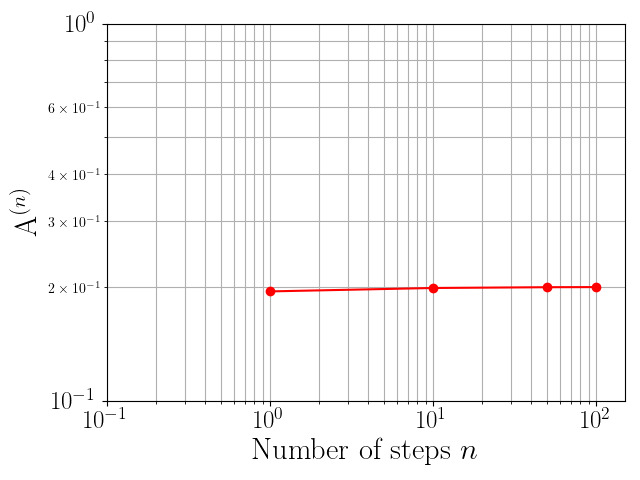}\label{fig:sumarea_HIGHER} }
	\caption{Evolution of $\step{\Fapp}{n}$ and the minimum of the quality and the total area as functions of the iteration count, $n$, for a discontinuous ground truth $\Fdag$ with $\ER = 10^{4}$.}
	\label{fig:Cont_ER_HIGHER}
\end{figure}

\section{Application to optimal uncertainty quantification}%
\label{sec:application_ouq}%

\subsection{Optimal uncertainty quantification}
\label{sec:ouq_overview}

In the \emph{optimal uncertainty quantification} paradigm proposed by \citet{Owhadi2013} and further developed by, e.g., \citet{Sullivan2013} and \citet{Han2015}, upper and lower bounds on the performance of an incompletely-specified system are calculated via optimisation problems.
More concretely, one is interested in the probability that a system, whose output is a function $g^{\dagger} \colon \mathcal{X} \to \Reals$ of inputs $\mathbf{\Xi}$ distributed according to a probability measure $\mu^{\dagger}$ on an input space $\mathcal{X}$, satisfies $g^{\dagger} (\mathbf{\Xi}) \leq x$, where $x$ is a specified performance threshold value.
We emphasise that although we focus on a scalar performance measure, the input $\mathbf{\Xi}$ may be a multivariate random variable.

In practice, $\mu^{\dagger}$ and $g^{\dagger}$ are not known exactly;
rather, it is known only that $(\mu^{\dagger}, g^{\dagger}) \in \mathcal{A}$ for some admissible subset $\mathcal{A}$ of the product space of all probability measures on $\mathcal{X}$ with the set of all real-valued functions on $\mathcal{X}$.
Thus, one is interested in
\[
	\underline{P}_{\mathcal{A}} (x) \defeq \inf_{(\mu, g) \in \mathcal{A}} \P_{\mathbf{\Xi} \sim \mu} [ g(\mathbf{\Xi}) \leq x ]
	\quad
	\text{and}
	\quad
	\overline{P}_{\mathcal{A}} (x) \defeq \sup_{(\mu, g) \in \mathcal{A}} \P_{\mathbf{\Xi} \sim \mu} [ g(\mathbf{\Xi}) \leq x ] .
\]
The inequality
\[
	0 \leq \underline{P}_{\mathcal{A}} (x) \leq \P_{\mathbf{\Xi} \sim \mu^{\dagger}} [ g^{\dagger}(\mathbf{\Xi}) \leq x ] \leq \overline{P}_{\mathcal{A}} (x) \leq 1
\]
is, by definition, the tightest possible bound on the quantity of interest $\P_{\mathbf{\Xi} \sim \mu^{\dagger}} [ g^{\dagger}(\mathbf{\Xi}) \leq x ]$ that is compatible with the information used to specify $\mathcal{A}$.
Thus, the optimal UQ perspective enriches the principles of worst- and best-case design to account for distributional and functional uncertainty.
We concentrate our attention hereafter, without loss of generality, on the least upper bound $\overline{P}_{\mathcal{A}} (x)$.

\begin{Remark}
	\label{rmk:reduction}
	The main focus of this paper is the dependency of $\overline{P}_{\mathcal{A}} (x)$ on $x$.
	In practice, an underlying task is, for any individual $x$, reducing the calculation of $\overline{P}_{\mathcal{A}} (x)$ to a tractable finite-dimensional optimisation problem.
	Central enabling results here are the \emph{reduction theorems} of \citet[Section~4]{Owhadi2013}, which, loosely speaking, say that if, for each $g$, $\{ \mu \mid (\mu, g) \in \mathcal{A} \}$ is specified by a system of $m$ equality or inequality constraints on expected values of arbitrary test functions under $\mu$, then for the determination of $\overline{P}_{\mathcal{A}} (x)$ it is sufficient to consider only distributions $\mu$ that are convex combinations of at most $m + 1$ point masses;
	the optimisation variables are then the $m$ independent weights and $m + 1$ locations in $\mathcal{X}$ of these point masses.
	If $\mu$ factors as a product of distributions (i.e.\ $\mathbf{\Xi}$ is a vector with independent components), then this reduction theorem applies componentwise.
\end{Remark}

As \revised{a function} of the performance threshold $x$, $\overline{P}_{\mathcal{A}} (x)$ \revised{is} an increasing function, and so it is potentially advantageous to determine $\overline{P}_{\mathcal{A}} (x)$ jointly for a wide range of $x$ values using the algorithm developed above.
Indeed, determining $\overline{P}_{\mathcal{A}} (x)$ for many values of $x$, rather than just one value, is desirable for multiple reasons:
\begin{enumerate}
	\item Since numerical optimisation to determine $\overline{P}_{\mathcal{A}} (x)$ may be affected by errors, computing several values of $\overline{P}_{\mathcal{A}} (x)$ could lead to validate their consistency as the function $x \mapsto \overline{P}_{\mathcal{A}} (x)$ must be increasing;
	\item The function $\overline{P}_{\mathcal{A}} (x)$ can be discontinuous.
	Thus, by computing several values of $\overline{P}_{\mathcal{A}} (x)$, one can highlight potential discontinuities and can identify key threshold values of $x \mapsto \overline{P}_{\mathcal{A}} (x)$.
\end{enumerate}

\subsection{Test case}

For the application of \Cref{our_algorithm} to OUQ, we study the robust shape optimization of the two-dimensional RAE2822 airfoil \citep[Appendix A6]{Cook1979} using ONERA's CFD software \textit{elsA} \citep{Cambier2008}. The following example is taken from \citet{Dumont2019}.
The shape of the original RAE2822 is altered using four bumps located at four different locations: $5\%$, $20\%$, $40\%$, and $60\%$ of the way along the chord $c$ (see \Cref{fig:BumpAirfoil}).
These bumps are characterised by B-splines functions. 

\begin{figure}[h]
	\centering
	\includegraphics[scale = 0.3]{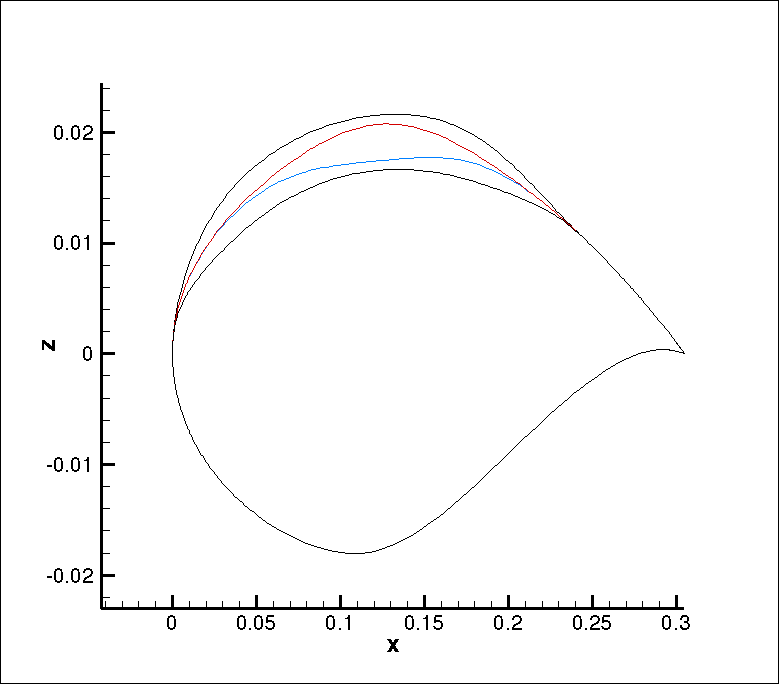}
	\caption{Black lines: Maximum and minimum deformation of the RAE2822 profile.
	Red: Maximum deformation of the third bump alone.
	Blue: Minimum deformation of the third bump alone.
	This image is taken from \citet{Dumont2019}.}
	\label{fig:BumpAirfoil}
\end{figure}

The lift-to-drag ratio $\frac{C_l}{C_d}$ of the RAE2822 wing profile (see \Cref{fig:airfoil}) at Reynolds Number $Re = 6.5\cdot10^{6}$, Mach number $M_{\infty} = 0.729$ and angle of attack $\alpha = 2.31\degree$ is chosen as the performance function $g^{\dagger}$ with inputs $\mathbf{\Xi} = (\Xi_1, \Xi_2, \Xi_3, \Xi_4)$, where $(\Xi_i)_{i=1\dots4}$ is the amplitude of each bump.
They will be considered as random variables over their respective range given in \Cref{tab:ISES_inputs}.

\begin{figure}
	\centering
	\includegraphics[scale = 0.7]{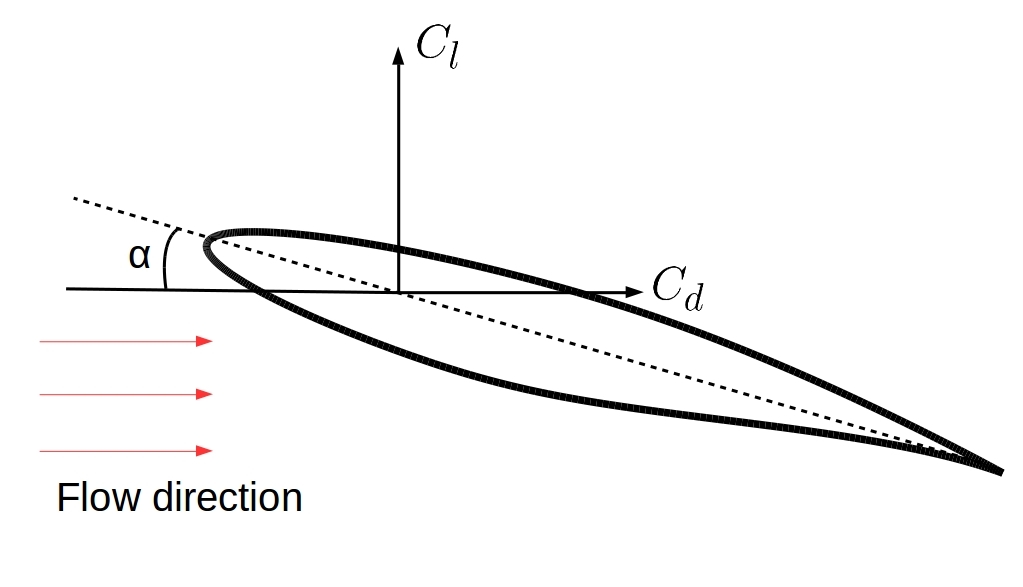}
	\caption{Picture depicting the lift $C_l$ and the drag $C_d$ of an airfoil.}
	\label{fig:airfoil}
\end{figure}

\begin{table}[h]
	\centering
	\begin{tabular}{| c | c | c |}
		\hline
		& Range & Law \\ \hline
		Bump 1: $\Xi_1$ & [-0.0025c; +0.0025c] & $\mu^{\dagger}_1$: Beta law with $\alpha = 6, \beta = 6$\\ \hline
		Bump 2: $\Xi_2$ & [-0.0025c; +0.0025c] & $\mu^{\dagger}_2$: Beta law with $\alpha = 2, \beta = 2$ \\ \hline
		Bump 3: $\Xi_3$ & [-0.0025c; +0.0025c] & $\mu^{\dagger}_3$: Beta law with $\alpha = 2, \beta = 2$ \\ \hline
		Bump 4: $\Xi_4$ & [-0.0025c; +0.0025c] & $\mu^{\dagger}_4$: Beta law with $\alpha = 2, \beta = 2$ \\
		\hline
	\end{tabular}
	\caption{Range of each input parameter.}
	\label{tab:ISES_inputs}
\end{table}

The corresponding flow values are the ones described in test case $\#6$ together with the wall interferences corrections formulas given in \citet[Chapter 6]{Garner1966} and in \citet[Section 5.1]{Haase1993}.
Moreover, we will assume that $(\Xi_i)_{i=1\dots4}$ are mutually independent.
An ordinary Kriging procedure has been chosen to build a metamodel (or response surface) of $g^{\dagger}$, which is identified with the actual response function $g^\dag$ in the subsequent analysis.
A tensorised grid of 9 equidistributed abscissas for each parameter is used.
The model is then based on $N = 9^4 = 6561$ observations. 
In that respect, a Gaussian kernel
\begin{equation*}
	K(\mathbf{\Xi},\mathbf{\Xi'})= \exp \left( -\frac{1}{2}\sum_{i=1}^{4}\frac{(\Xi_i - \Xi'_i)^{2}}{\gamma_i^2} \right)
\end{equation*}
has been chosen, where $\mathbf{\Xi} = (\Xi_1, \Xi_2, \Xi_3, \Xi_4)$ and $\mathbf{\Xi'} = (\Xi'_1, \Xi'_2, \Xi'_3, \Xi'_4)$ are inputs of the function $g^{\dagger}$, and where $\gamma = (\gamma_1, \gamma_2, \gamma_3, \gamma_4)$ are the parameters of the kernel.
These parameters are chosen to minimize the variance between the ground truth data defined by the $N$ observations and their Kriging metamodel $g^{\dagger}$.
The responce surfaces in the $(\Xi_1, \Xi_3)$ plan for two values of $(\Xi_2, \Xi_4)$ are shown on \Cref{fig:surface_init}.

\begin{figure}[t]
	\centering
	\subfigure[$\Xi_2 = -0.0025,\;\Xi_4 = 0$.]{\includegraphics[width=0.45\textwidth]{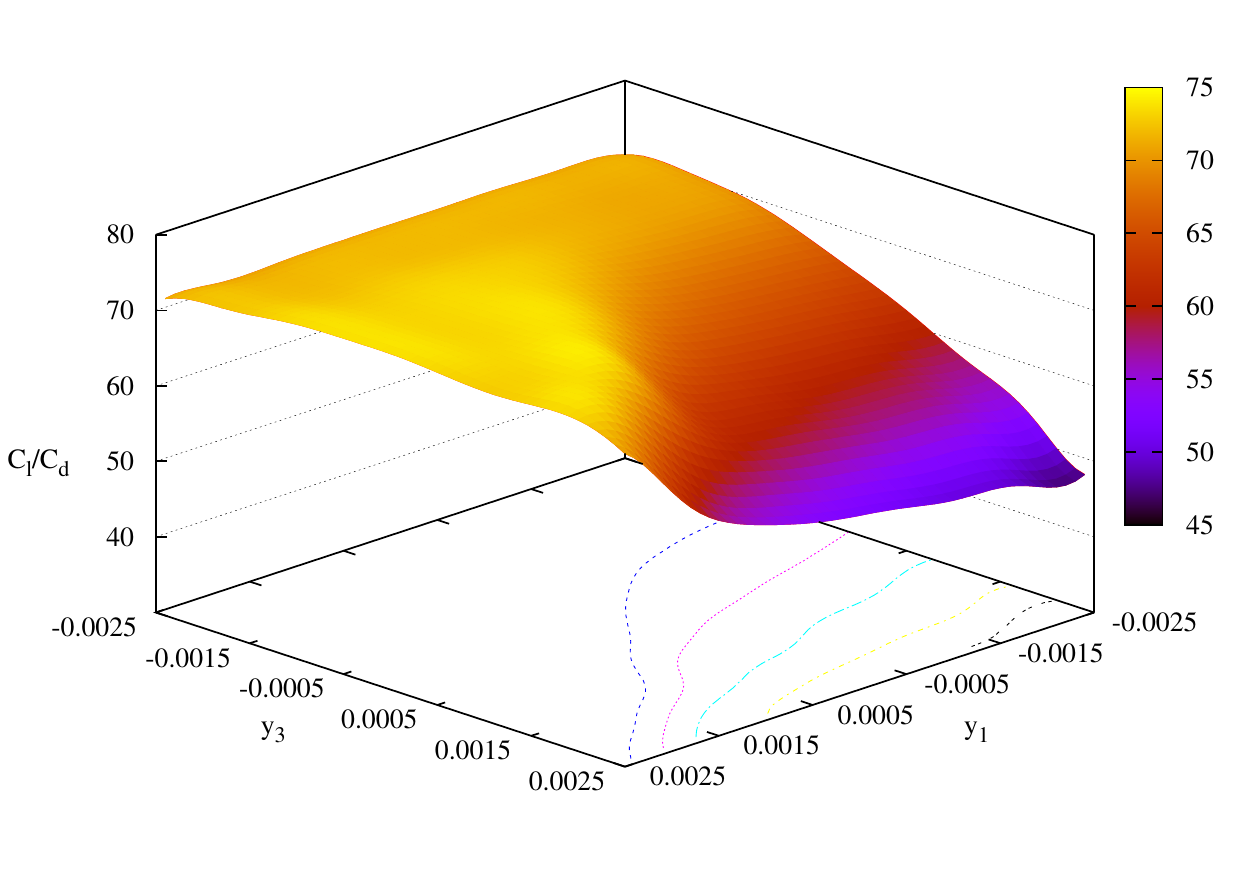} }\label{subfig:surf_init_a}
	\subfigure[$\Xi_2 = 0.0025,\;\Xi_4 = 0$.]{\includegraphics[width=0.45\textwidth]{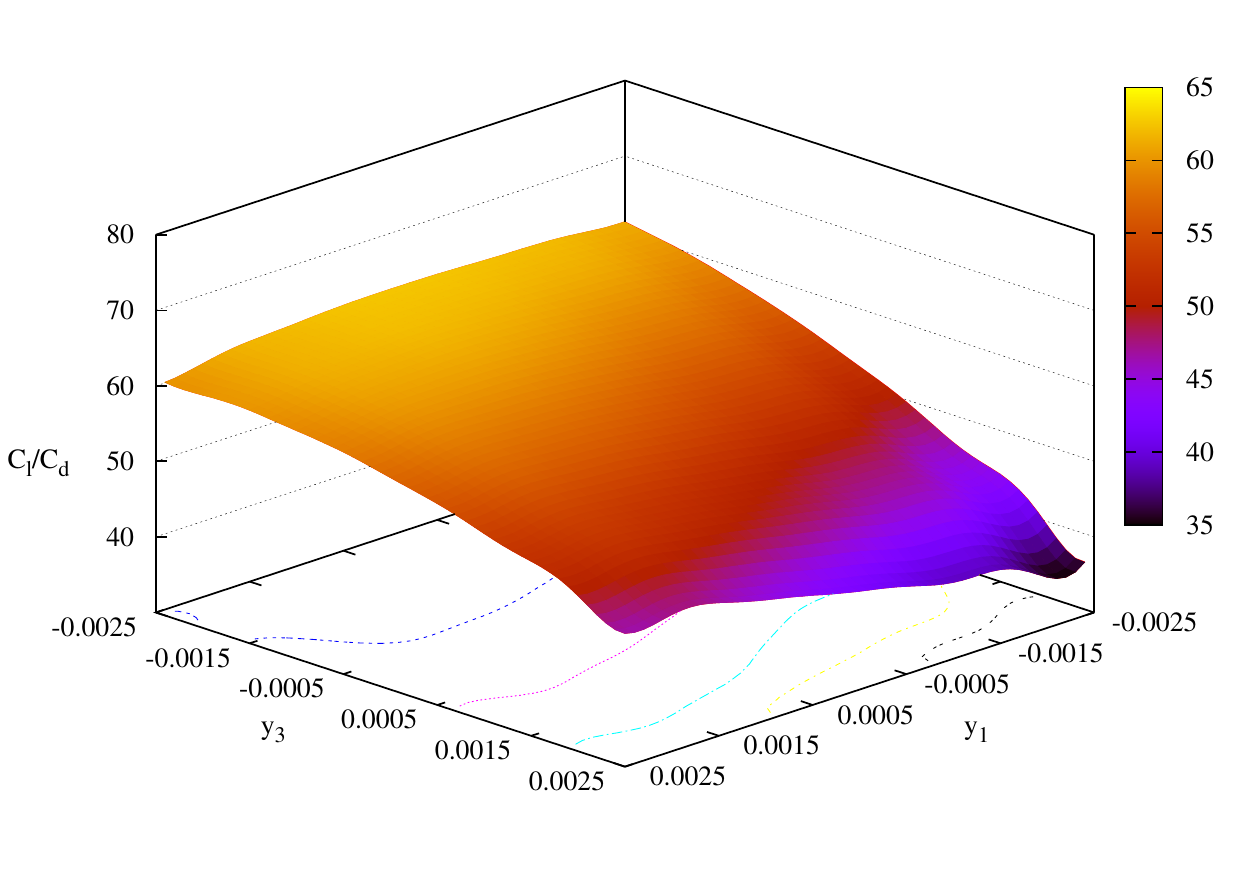} }\label{subfig:surf_init_b}
	\caption{Response surface in the $(\Xi_1, \Xi_3)$ plane with $(\Xi_2 = -0.0025, \Xi_4= 0)$ (a) and $(\Xi_2 = 0.0025, \Xi_4= 0)$ (b).
	These images are taken from \citet{Dumont2019}.}
	\label{fig:surface_init}
\end{figure}

One seeks to determine $\overline{P}_{\mathcal{A}} (x) \defeq \sup_{\mu \in \mathcal{A}} \P_{\mathbf{\Xi} \sim \mu} [ g^{\dagger}(\mathbf{\Xi}) \leq x ]$, where the admissible set $\mathcal{A}$ is defined as follows:
\begin{equation}
	\label{eq:A_test}
	\mathcal{A} = \left\{ (g, \mu) \,\middle|\, \begin{array}{c}
	\mathbf{\Xi} \in \mathcal{X} = \mathcal{X}_1 \times \mathcal{X}_2 \times \mathcal{X}_3 \times \mathcal{X}_4 \\
	g: \mathcal{X} \mapsto \mathcal{Y} \text{\;is known equal to\;} g^{\dagger}\\
	\mu = \mu_1 \otimes \mu_2 \otimes \mu_3\otimes \mu_4\\
	\mathbb{E}_{\mathbf{\Xi}\sim \mu}[g(\mathbf{\Xi})] = \text{LD}
	\end{array}\right\}.
\end{equation}
A priori, finding $\overline{P}_{\mathcal{A}} (x)$ is not computationally tractable because it requires a search over a infinite-dimensional space of probability measures defined by $\mathcal{A}$.
Nevertheless, as described briefly in \Cref{rmk:reduction}, it has been shown in \citet{Owhadi2013} that this optimisation problem can be reduced to a finite-dimensional one, where now the probability measures are products of finite convex combinations of Dirac masses.

\begin{Remark}
	The ground truth law $\mu^\dag$ of each input variable given in \Cref{tab:ISES_inputs} is only used to compute the expected value $\mathbb{E}_{\mathbf{\Xi}\sim \mu}[g(\mathbf{\Xi})] = \text{LD}$.
	This expected value is computed with $10^4$ samples.
\end{Remark}

\begin{Remark}
	The admissible set $\mathcal{A}$ from \eqref{eq:A_test} can be understood as follows: 
	\begin{itemize}
		\item One knows the range of each input parameter $(\Xi_i)_{i=1,\dots,4}$;
		\item $g$ is exactly known as $g = g^{\dagger}$;
		\item $(\Xi_i)_{i=1,\dots,4}$ are independent;
		\item One only knows the expected value of $g$: $\mathbb{E}_{\mathbf{\Xi}\sim \mu}[g(\mathbf{\Xi})]$.
	\end{itemize} 
\end{Remark}

The optimisation problem of determining $\overline{P}_{\mathcal{A}} (x)$ for each chosen $x$ was solved using the Differential Evolution algorithm of \citet{Storn97} within the \textit{mystic} optimisation framework \citep{McKerns2011}.
Ten iterations of \Cref{our_algorithm} have been performed using $\ER = 1 \times 10^{4}$.
The evolution of $\overline{P}_{\mathcal{A}} (x)$ as function of the iteration count, $n$, is shown on \Cref{fig:OUQ_ALGO}.
At $n=0$ --- see \Cref{fig:OUQ_ALGO}\subref{fig:OUQ_0} --- two consistent points are present at $x = 57.51$ and $x = 67.51$.
At this step, $\step{\WA}{0} = 35289$. As $\step{\WA}{0} \geq \ER$, at next step $n=1$, the algorithm adds a new point at the middle of the biggest rectangle --- see \Cref{fig:OUQ_ALGO}\subref{fig:OUQ_1} and \Cref{fig:OUQ_q_area}\subref{fig:total_area_OUQ}.
After $n=10$ steps, eight points are now present in total with a minimum quality increasing from $5000$ to $11667$ and with a total area decreasing from $7.05$ to $0.84$;
see \Cref{fig:OUQ_q_area}\subref{fig:min_q_OUQ} and \Cref{fig:OUQ_q_area}\subref{fig:total_area_OUQ} respectively.

\begin{figure}[t]
	\centering
	\subfigure[$n = 0$]{\includegraphics[width=0.40\textwidth]{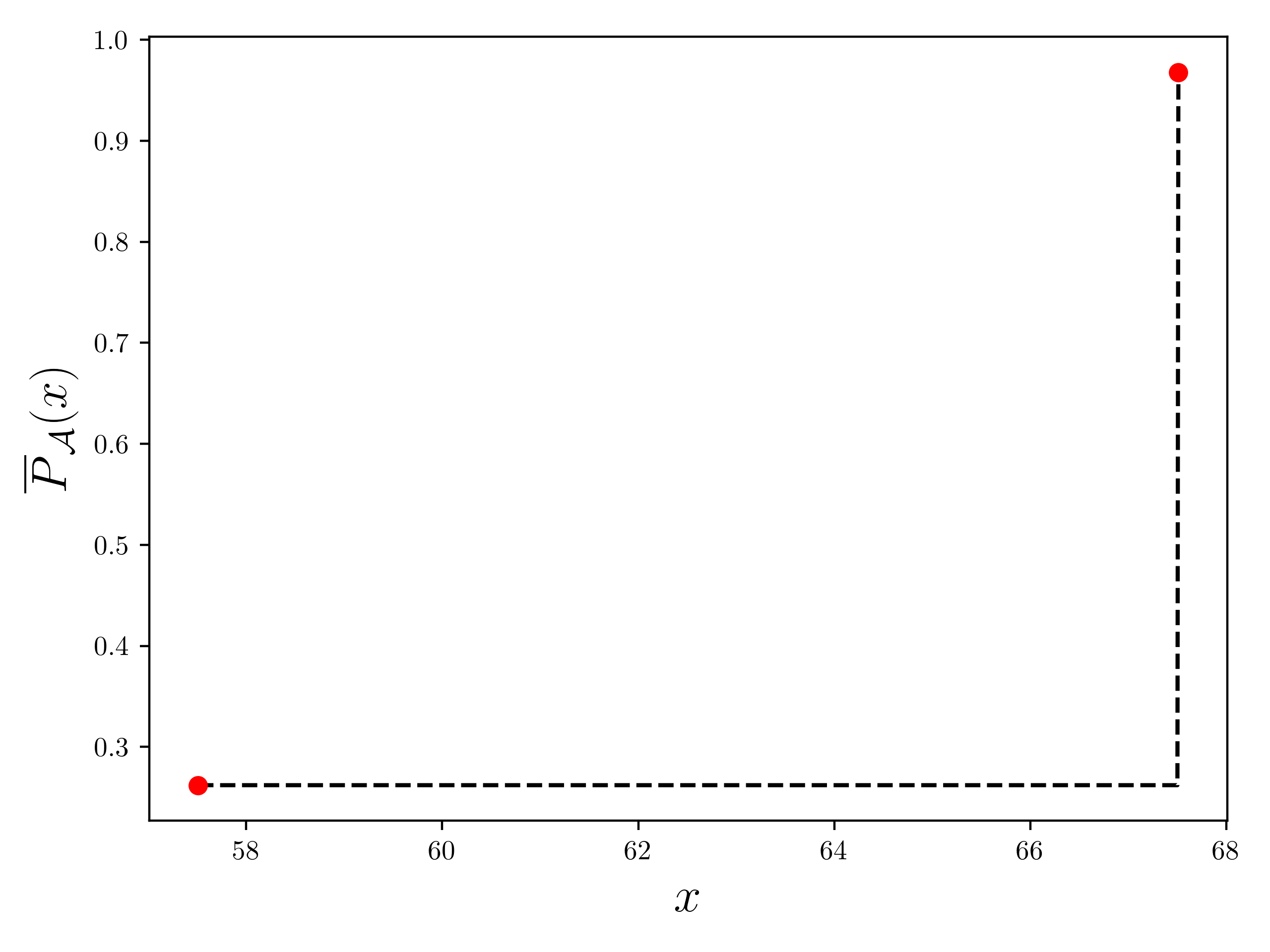} \label{fig:OUQ_0}}
	\subfigure[$n = 1$]{\includegraphics[width=0.40\textwidth]{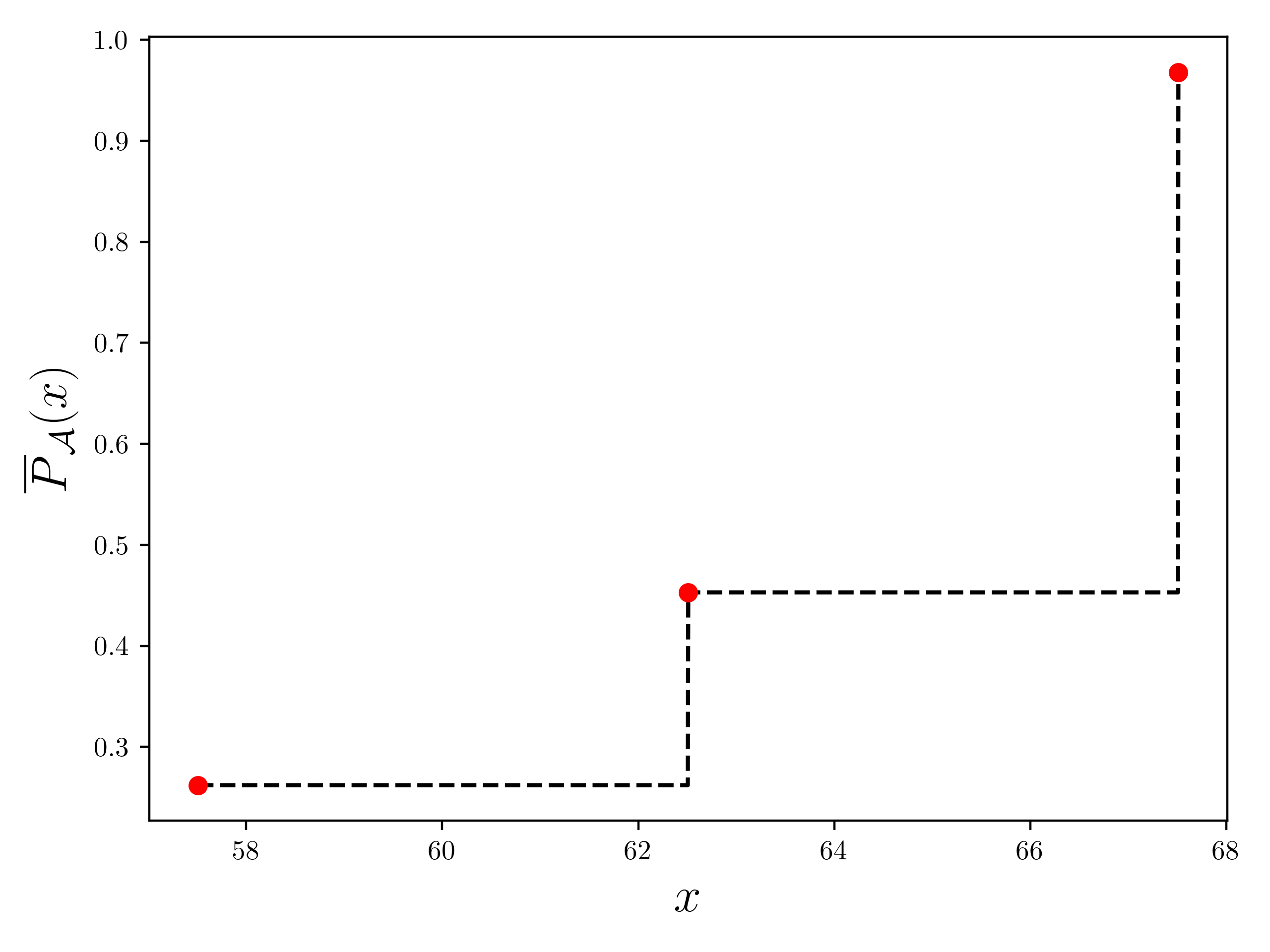} \label{fig:OUQ_1}} \hfill
	\subfigure[$n = 5$]{\includegraphics[width=0.40\textwidth]{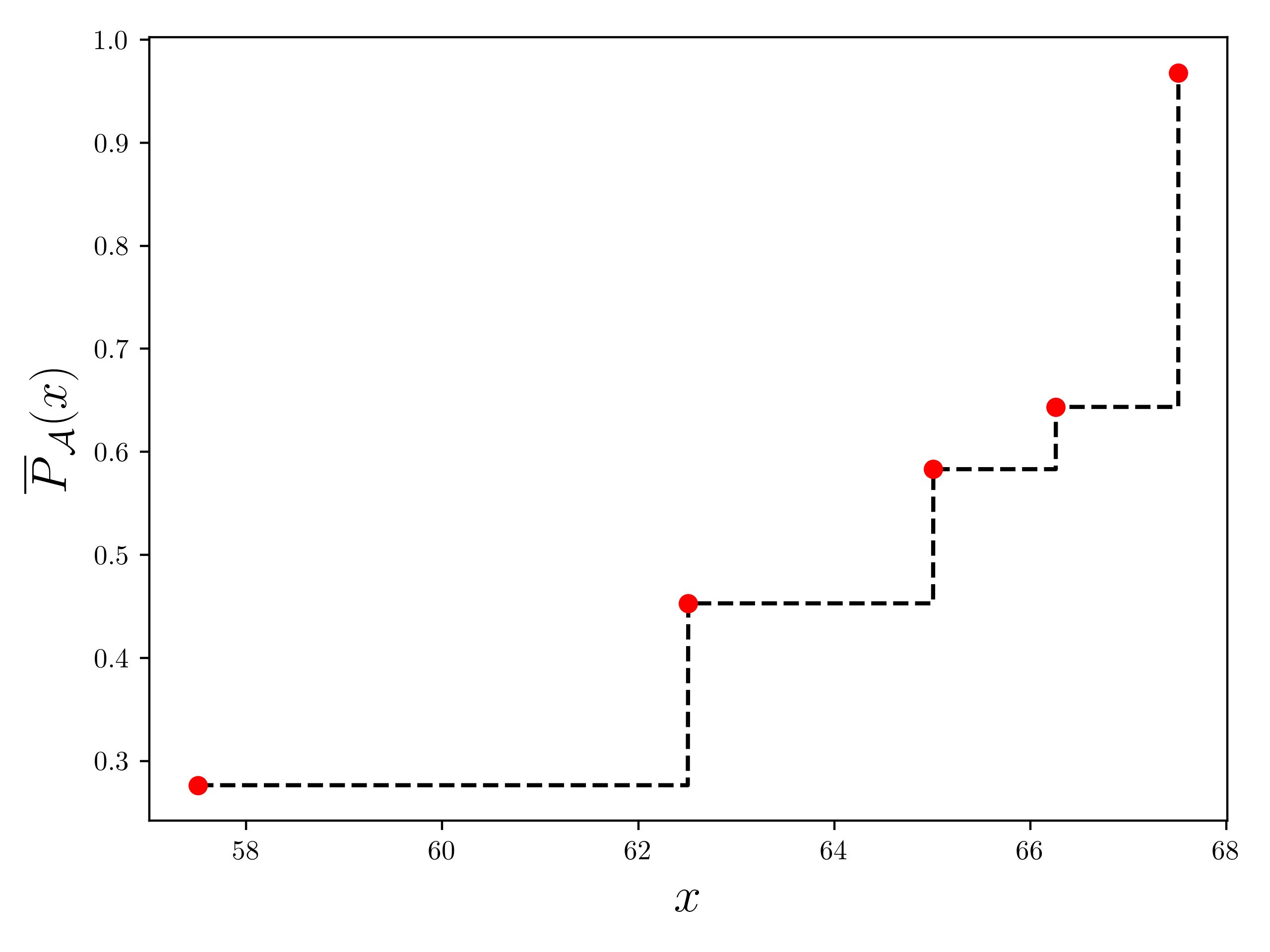} \label{fig:OUQ_5}}
	\subfigure[$n = 10$]{\includegraphics[width=0.40\textwidth]{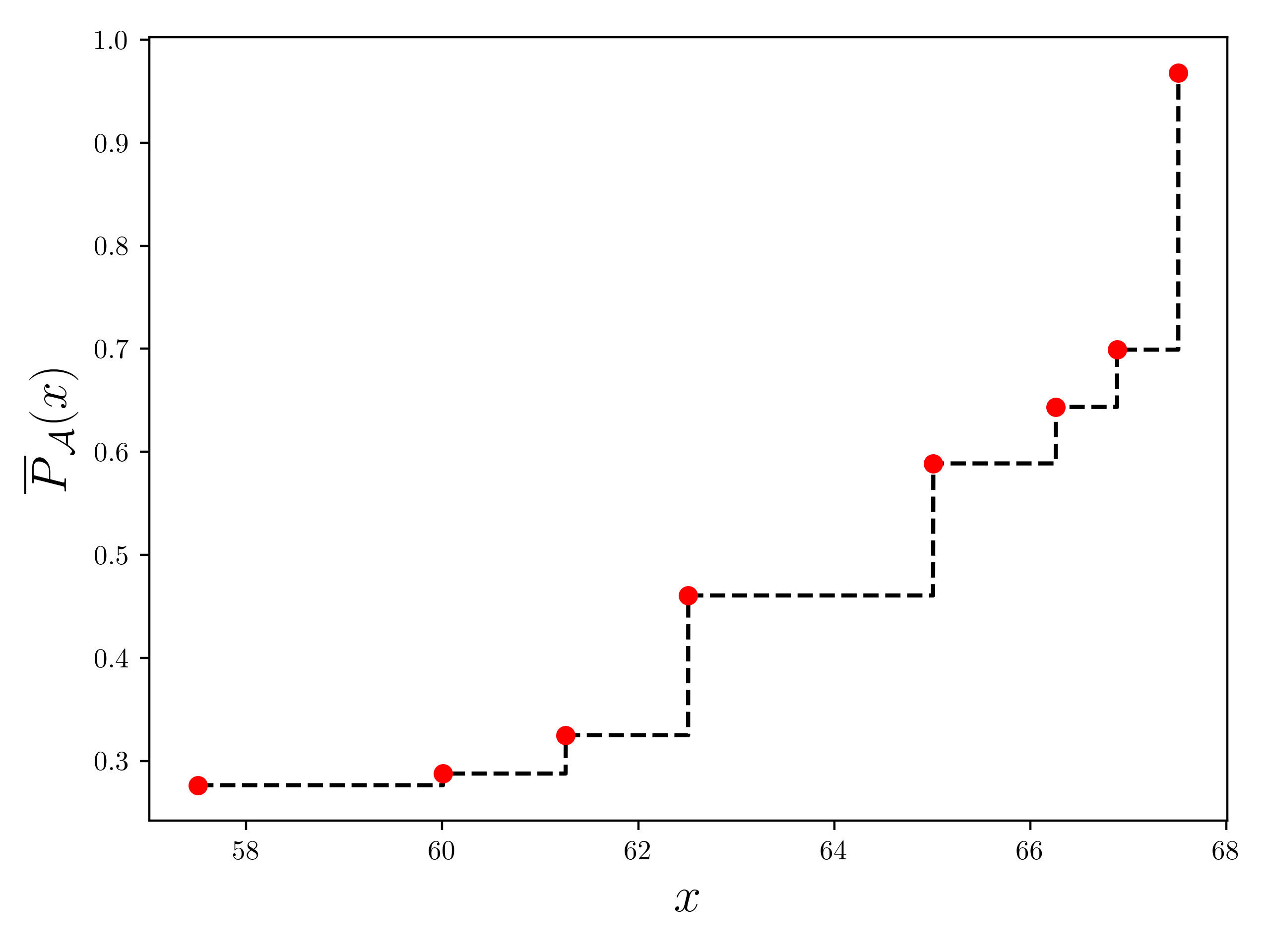} \label{fig:OUQ_10}}
	\caption{Evolution of $\overline{P}_{\mathcal{A}} (x)$ as function of the iteration count, $n$.}
	\label{fig:OUQ_ALGO}
\end{figure}

\begin{figure}[ht!]
	\centering
	\subfigure[Evolution of the minimum of the quality.]{\includegraphics[width=0.40\textwidth]{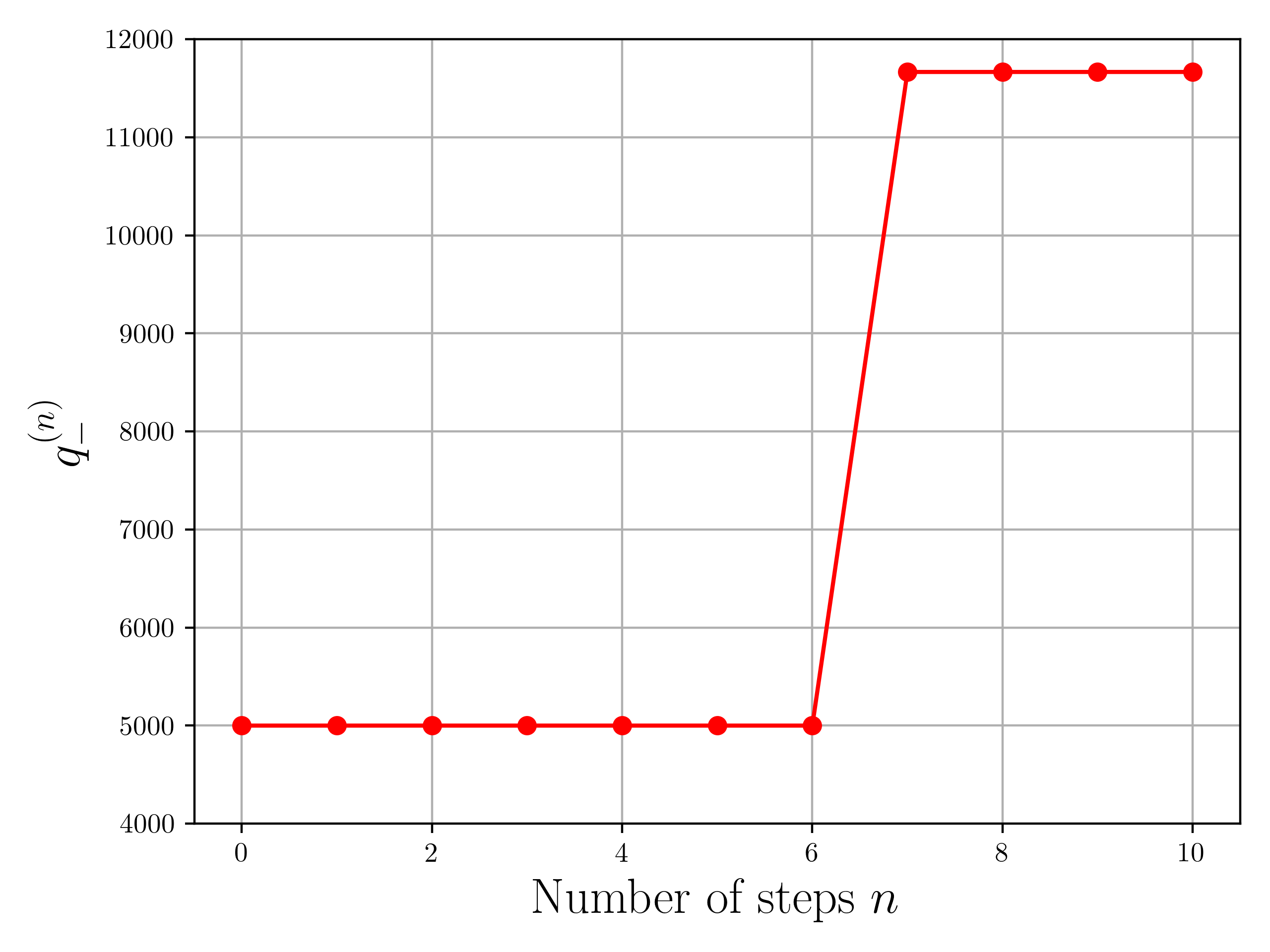} \label{fig:min_q_OUQ}}
	\subfigure[Evolution of the total area.]{\includegraphics[width=0.40\textwidth]{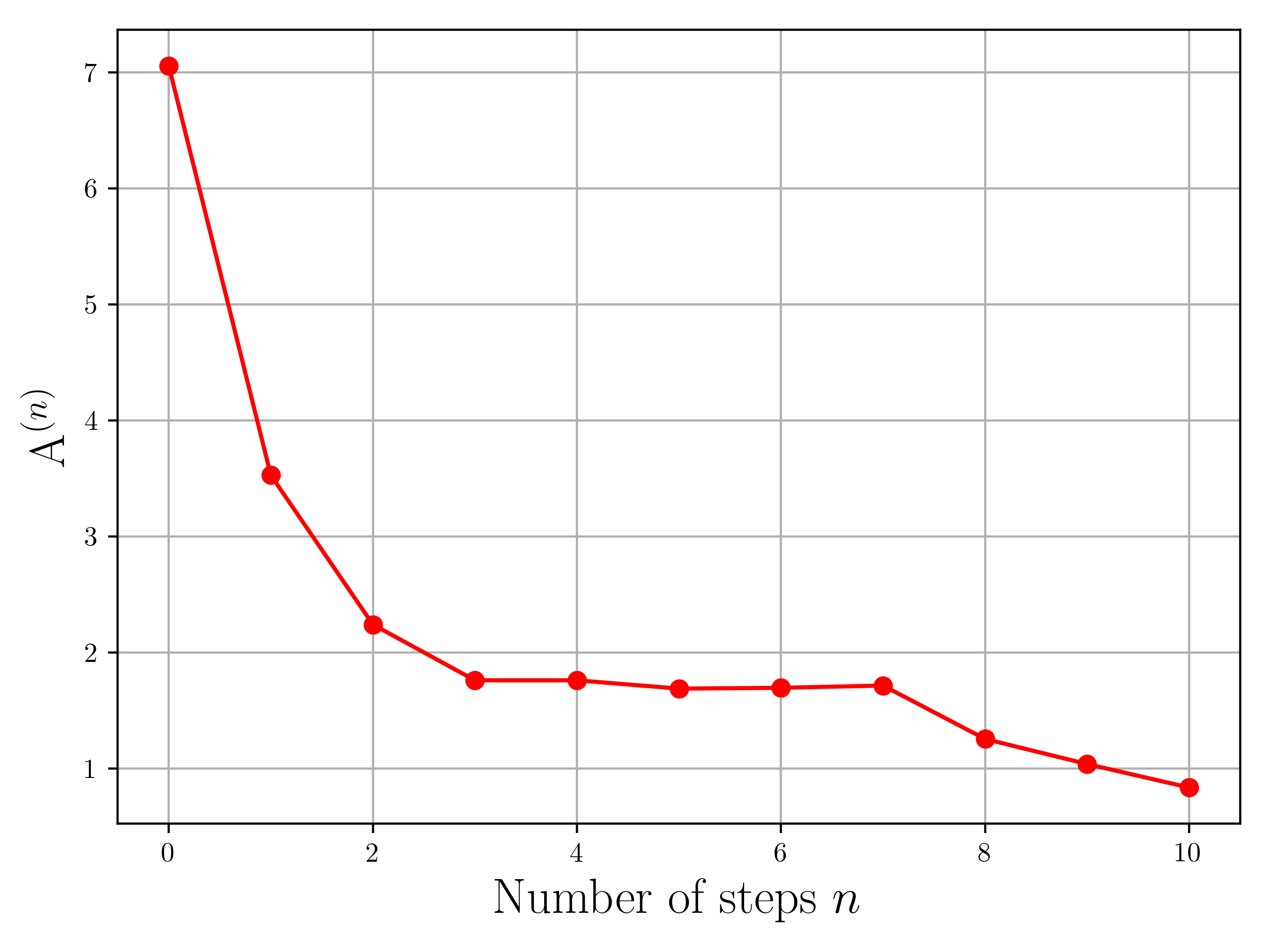}
	\label{fig:total_area_OUQ}}
	\caption{Evolution of the minimum of the quality and the total area as function of the iteration count, $n$.}
	\label{fig:OUQ_q_area}
\end{figure}

\revised{The number of iterations in this complex numerical experiment has been limited to $10$ because obtaining new or improved data points consistent throughout the optimization algorithm may take up to two days (wall-clock time on a personal computer equipped with an Intel Core i5-6300HQ processor with $4$ cores and $6$\,MB cache memory) for one single point. This running time is increased further for data points of higher quality. Nevertheless, this experiment shows that the proposed algorithm can be used for real-world examples in an industrial context}.


\section{Concluding remarks}%
\label{sec:conclusion}%

In this paper we have developed an algorithm to reconstruct a monotonically increasing function such as the cumulative distribution function of a real-valued random variable, or the least upper bound of the performance criterion of a system as a function of its performance threshold.
In particular, this latter setting has relevance to the optimal uncertainty quantification (OUQ) framework of \cite{Owhadi2013} we have in mind for applications to real-world incompletely specified systems.
The algorithm uses imperfect pointwise evaluations of the target function, subject to partially controllable one-sided errors, to direct further evaluations either at new sites in the function's domain or to improve the quality of evaluations at already-evaluated sites.
It allows for some flexibility at targeting either strategy through a user-defined ``exchange rate'' parameter, yielding an approximation of the target function with few high-quality points or alternatively more lower-quality points.
We have studied its convergence properties and have applied it to several examples:
known target functions that are either continuous and discontinuous, and a performance function for aerodynamic design of a well-documented standard profile in the OUQ setting.

\Cref{our_algorithm} is reminiscent of the classical PAVA approach to isotonic regression that applies to statistical inference with order restrictions.
Examples of its use can be found in shape constrained or parametric density problems as illustrated in e.g.\ \cite{Groeneboom2014}.
Possible improvements and extensions of our algorithm include weighting the areas $\smash{\step{\area_i}{n}}$ as they are summed up to form the total weighted area $\smash{\step{\WA}{n}}$ driving the iterative process, in order to optimally enforce both the addition of ``steps'' $\smash{\step{s_i}{n}}$ in the reconstruction function $\smash{\step{\Fapp}{n}}$ of \Cref{def:piewise_constant_funct}, and the improvement of their ``heights'' $\smash{\step{\yobs_i}{n}}$.
This could be achieved considering for example the following alternative definition $\smash{\step{i_+}{n}}=\smash{\arg\max_i\{(\step{\Nobs}{n}-i-1)\step{\area_i}{n}\}}$ in \Cref{our_algorithm}, which results in both adding a step to the $\smash{\step{i_+}{n}}$-th current one and possibly improving all subsequent evaluations $\smash{\step{\yobs_i}{n+1}}$, $i>\smash{\step{i_+}{n}}$.
We may further envisage to adapt the ideas elaborated in this research to the reconstruction of convex functions by extending the notion of consistency.
These perspectives shall be considered in future works.






\section*{Acknowledgements}
\addcontentsline{toc}{section}{Acknowledgements}

The work of J.L.A.\ and {\'E}.S.\ has been partially supported by ONERA within the Laboratoire de Math\-\'ema\-tiques Appliqu\'ees pour l'A\'eronautique et Spatial (LMA$^2$S).
L.B.\ is supported by a CDSN grant from the French Ministry of Higher Education (MESRI) and a grant from the German Academic Exchange Service (DAAD), Program \#57442045.
T.J.S.\ has been partially supported by the Freie Universit\"at Berlin within the Excellence Strategy of the DFG, including project TrU-2 of the Excellence Cluster ``MATH+ The Berlin Mathematics Research Center'' (EXC-2046/1, project 390685689) and DFG project 415980428.

\bibliographystyle{abbrvnat}
\addcontentsline{toc}{section}{References}

\bibliography{references}

\begin{thebibliography}{16}
\providecommand{\natexlab}[1]{#1}
\providecommand{\url}[1]{\texttt{#1}}
\expandafter\ifx\csname urlstyle\endcsname\relax
  \providecommand{\doi}[1]{doi: #1}\else
  \providecommand{\doi}{doi: \begingroup \urlstyle{rm}\Url}\fi

\bibitem[Barlow et~al.(1972)Barlow, Bartholomew, Bremner, and
  Brunk]{Barlow1972}
R.~E. Barlow, D.~J. Bartholomew, J.~M. Bremner, and H.~D. Brunk.
\newblock \emph{Statistical {I}nterference under {O}rder {R}estrictions. {T}he
  {T}heory and {A}pplication of {I}sotonic {R}egression}.
\newblock John Wiley \& Sons, London-New York-Sydney, 1972.

\bibitem[Cambier et~al.(2013)Cambier, Heib, and Plot]{Cambier2008}
L.~Cambier, S.~Heib, and S.~Plot.
\newblock The {ONERA} els{A} {CFD} software: input from research and feedback
  from industry.
\newblock \emph{Mechanics \& Industry}, 14\penalty0 (3):\penalty0 159--174,
  2013.
\newblock \doi{10.1051/meca/2013056}.

\bibitem[Cook et~al.(1979)Cook, McDonald, and Firmin]{Cook1979}
P.~H. Cook, M.~A. McDonald, and M.~C.~P. Firmin.
\newblock Aerofoil {RAE} 2822 pressure distributions, and boundary layer and
  wake measurements.
\newblock In \emph{Experimental data base for computer program assessment.
  {AGARD} Advisory Report No. 138}. NATO, 1979.
\newblock URL \url{http://eda-ltd.com.tr/caeeda\_doc/AGARD-AR-138.pdf}.

\bibitem[de~Leeuw et~al.(2009)de~Leeuw, Hornik, and Mair]{Jan2009}
J.~de~Leeuw, K.~Hornik, and P.~Mair.
\newblock Isotone optimization in {R}: {P}ool-{A}djacent-{V}iolators
  {A}lgorithm ({PAVA}) and active set methods.
\newblock \emph{J. Stat. Software}, 32\penalty0 (5):\penalty0 1--24, 2009.
\newblock \doi{10.18637/jss.v032.i05}.

\bibitem[Dumont et~al.(2019)Dumont, Hantrais-Gervois, Passaggia, Peter,
  Salah~el Din, and Savin]{Dumont2019}
A.~Dumont, J.-L. Hantrais-Gervois, P.-Y. Passaggia, J.~Peter, I.~Salah~el Din,
  and {\'E}.~Savin.
\newblock Ordinary kriging surrogates in aerodynamics.
\newblock In C.~Hirsch, D.~Wunsch, J.~Szumbarski,
  {\L}.~{\L}aniewski-Wo{\l}{\l}k, and J.~Pons-Prats, editors, \emph{Uncertainty
  Management for Robust Industrial Design in Aeronautics: Findings and Best
  Practice Collected During UMRIDA, a Collaborative Research Project
  (2013--2016) Funded by the European Union}, pages 229--245. Springer
  International Publishing, Cham, 2019.
\newblock \doi{10.1007/978-3-319-77767-2_14}.

\bibitem[Garner et~al.(1966)Garner, Rogers, Acum, and Maskell]{Garner1966}
H.~C. Garner, E.~W.~E. Rogers, W.~E.~A. Acum, and E.~C. Maskell.
\newblock Subsonic wind tunnel jwall corrections.
\newblock {AGARD}o-graph 109, NATO, 1966.

\bibitem[Groeneboom and Jongbloed(2014)]{Groeneboom2014}
P.~Groeneboom and G.~Jongbloed.
\newblock \emph{Nonparametric {E}stimation under {S}hape {C}onstraints:
  {E}stimators. Algorithms and {A}symptotics}.
\newblock Cambridge University Press, Cambridge, 2014.
\newblock \doi{10.1017/CBO9781139020893}.

\bibitem[Haase et~al.(1993)Haase, Bradsma, Elsholz, Leschziner, and
  Schwamborn]{Haase1993}
W.~Haase, F.~Bradsma, E.~Elsholz, M.~Leschziner, and D.~Schwamborn.
\newblock \emph{{EUROVAL} - {A}n {E}uropean {I}nitiative on {V}alidation of
  {CFD} {C}odes}.
\newblock Vieweg Verlag, Wiesbaden, 1993.
\newblock \doi{10.1007/978-3-663-14131-0}.

\bibitem[Han et~al.(2015)Han, Tao, Topcu, Owhadi, and Murray]{Han2015}
S.~Han, M.~Tao, U.~Topcu, H.~Owhadi, and R.~M. Murray.
\newblock Convex {O}ptimal {U}ncertainty {Q}uantification.
\newblock \emph{SIAM J. Optim.}, 25\penalty0 (3):\penalty0 1368--1387, 2015.
\newblock \doi{10.1137/13094712X}.

\bibitem[Jordan et~al.(2019)Jordan, Mühlemann, and Ziegel]{Jordan2019}
A.~I. Jordan, A.~Mühlemann, and J.~F. Ziegel.
\newblock Optimal solutions to the isotonic regression problem, 2019.
\newblock URL \url{https://arxiv.org/abs/1904.04761}.

\bibitem[McKerns et~al.(2011)McKerns, Strand, Sullivan, Fang, and
  Aivazis]{McKerns2011}
M.~M. McKerns, L.~Strand, T.~J. Sullivan, A.~Fang, and M.~A.~G. Aivazis.
\newblock {Building a framework for predictive science}.
\newblock In S.~van~der Walt and J.~Millman, editors, \emph{Proceedings of the
  10th Python in Science Conference (SciPy 2011), June 2011}, pages 67--78,
  2011.
\newblock \doi{10.25080/Majora-ebaa42b7-00d}.

\bibitem[Owhadi et~al.(2013)Owhadi, Scovel, Sullivan, McKerns, and
  Ortiz]{Owhadi2013}
H.~Owhadi, C.~Scovel, T.~J. Sullivan, M.~McKerns, and M.~Ortiz.
\newblock Optimal {U}ncertainty {Q}uantification.
\newblock \emph{SIAM Rev.}, 55\penalty0 (2):\penalty0 271--345, 2013.
\newblock \doi{10.1137/10080782X}.

\bibitem[Robertson et~al.(1988)Robertson, Wright, and Dykstra]{Robertson1988}
T.~Robertson, F.~T. Wright, and R.~L. Dykstra.
\newblock \emph{Order {R}estricted {S}tatistical {I}nference}.
\newblock John Wiley \& Sons, London-New York-Sydney, 1988.

\bibitem[Storn and Price(1997)]{Storn97}
R.~Storn and K.~Price.
\newblock Differential evolution -- a simple and efficient heuristic for global
  optimization over continuous spaces.
\newblock \emph{Journal of Global Optimization}, 11:\penalty0 341--359, 1997.
\newblock \doi{10.1023/A:1008202821328}.

\bibitem[Sullivan et~al.(2013)Sullivan, McKerns, Meyer, Theil, Owhadi, and
  Ortiz]{Sullivan2013}
T.~J. Sullivan, M.~McKerns, D.~Meyer, F.~Theil, H.~Owhadi, and M.~Ortiz.
\newblock Optimal {U}ncertainty {Q}uantification for legacy data observations
  of {L}ipschitz functions.
\newblock \emph{ESAIM Math. Model. Numer. Anal.}, 47\penalty0 (6):\penalty0
  1657--1689, 2013.
\newblock \doi{10.1051/m2an/2013083}.

\bibitem[Tibshirani et~al.(2011)Tibshirani, Hoefling, and
  Tibshirani]{Tibshirani2011}
R.~J. Tibshirani, H.~Hoefling, and R.~Tibshirani.
\newblock Nearly-isotonic regression.
\newblock \emph{Technometrics}, 53\penalty0 (1):\penalty0 54--61, 2011.
\newblock \doi{10.1198/TECH.2010.10111}.

\end{thebibliography}

\end{document}